\newtheorem{theorem}{Theorem}[section]
\newtheorem{corollary}[theorem]{Corollary}
\newtheorem{lemma}[theorem]{Lemma}
\newtheorem{proposition}[theorem]{Proposition}
\newtheorem{remark}[theorem]{Remark}
\newtheorem{definition}[theorem]{Definition}
\DeclareMathOperator{\Rc}{Rc}
\DeclareMathOperator{\Div}{Div}
\newcommand{\E}{\mathcal{E}}
\newcommand{\F}{\mathcal{F}}
\newcommand{\W}{\mathcal{W}}
\newcommand{\R}{\mathbb{R}}
\begin{document}
\title[Entropy and eigenvalue on evolving manifolds]{Entropy and
  lowest eigenvalue \\ on evolving manifolds}

\author{Hongxin Guo}
\address{School of mathematics and information science, Wenzhou
  University, Wenzhou, Zhejiang 325035, China} \email{guo@wzu.edu.cn;
  hongxin.guo@uni.lu}

\author{Robert Philipowski \and Anton Thalmaier}
\address{Mathematics Research Unit, FSTC, University of Luxembourg, 6
  rue Richard Cou\-den\-hove-Kalergi, L-1359 Luxembourg, Grand-Duchy
  of Luxembourg} \email{robert.philipowski@uni.lu;
  anton.thalmaier@uni.lu}

\thanks{Acknowledgements. Research supported by NSF of China (grants
  no. 11001203 and 11171143) and Fonds National de la Recherche
  Luxembourg.}
\date{\today}

\begin{abstract}
  In this note we determine the first two derivatives of the classical
  Boltzmann-Shannon entropy of the conjugate heat equation on general
  evolving manifolds. Based on the second derivative of the
  Boltzmann-Shannon entropy, we construct Perelman's $\F$ and $\W$
  entropy in abstract geometric flows. Monotonicity of the entropies
  holds when a technical condition is satisfied.

  This condition is satisfied on static Riemannian manifolds with
  nonnegative Ricci curvature, for Hamilton's Ricci flow, List's
  extended Ricci flow, M\"{u}ller's Ricci flow coupled with harmonic
  map flow and Lorentzian mean curvature flow when the ambient space
  has nonnegative sectional curvature.

  Under the extra assumption that the lowest eigenvalue is
  differentiable along time, we derive an explicit formula for the
  evolution of the lowest eigenvalue of the Laplace-Beltrami operator
  with potential in the abstract setting.

\end{abstract}
\keywords{Ricci flow; Conjugate heat equation; Entropy; Eigenvalue.}
\subjclass[2010]{Primary 53C}

\maketitle
\section{introduction}
\subsection{Introduction}
Geometric flows have been studied extensively. The idea is to evolve
metrics in certain ways usually by heat type equations to obtain
better metrics on manifolds and thus to gain topological information
of the manifolds. It is desirable to derive evolution equations in a
general setting such that the formulas may be applied to various
flows. For instance, very nice general approaches to get monotone
quantities on evolving manifolds have been developed in
\cite{E-K-N-T,M}.

We briefly introduce notations of an abstract geometric flow.  Let $M$
be an $n$-dimensional compact manifold. Assume that $\alpha(t,y)$ is a
time-dependent symmetric two-tensor on $M$, and that $g(t)$ is a
family of one parameter Riemannian metrics evolving along the flow
equation
\begin{equation}\label{metric evolving}
  \frac{\partial g}{\partial t}=-2\alpha,\quad t\in (0, T),
\end{equation}
where $T$ is some fixed positive constant. Let $A:= g^{ij}\alpha_{ij}$
be the trace of $\alpha$ with respect to $g(t)$.

Classical quantities on static manifolds have nice applications on
evolving manifolds by certain natural modifications.  Such a quantity
is the Boltzmann-Shannon entropy for heat equation. Formally, the
conjugate of the heat operator $\frac{\partial}{\partial t}-\Delta$ on
space-time is $-\frac{\partial}{\partial t}-\Delta +A$. It turns out
as Perelman \cite{P} shows, on evolving manifolds it is natural to
work with the entropy for the conjugate heat equation. We will derive
the first two derivatives of Boltzmann-Shannon entropy for the
conjugate heat equation, and based on that we define Perelman's $\F$
and $\W$ entropy in the framework of abstract geometric flows.

Other classical quantities on static Riemannian manifolds are the
eigenvalues of the Laplace-Beltrami operator $\Delta$. When the metric
evolves, it is natural to include a potential function. Perelman
\cite{P} shows that the lowest eigenvalue of $-\Delta+ R/4$ is
monotone nondecreasing along the Ricci flow. Furthermore by deriving
explicit formula of the derivative, Cao \cite{Cao1, Cao2} shows that
the monotonicity holds for the lowest eigenvalue of $-\Delta+cR$ for
any $c\ge 1/4$, see also Li \cite{Li}.

In \cite{M} Reto M\"{u}ller derived formulas for the reduced volume in
abstract geometric flows.  His formulation is very general and thus
can be applied to different flows. He shows that the reduced volume is
monotone when a technical assumption holds, which is satisfied for
static manifolds with positive Ricci curvature, Hamilton's Ricci flow,
List's extended Ricci flow, M\"{u}ller's Ricci flow coupled with
harmonic map flow and Lorentzian mean curvature flow when the ambient
manifold has nonnegative sectional curvature.  This allows him to
establish new monotonicity formulas for these flows.

One of our purposes in this paper is to show that the same phenomena
as for reduced volume holds for entropy and eigenvalues.

\subsection{Notations and main results}
Throughout the whole paper, $M$ will be a compact manifold without
boundary.  Along the flow equation \eqref{metric evolving} the
Riemannian volume $dy$ of $M$ evolves by
$$
\frac{\partial}{\partial t} \,dy=-A\,dy
$$
and $A$ satisfies
$$
\frac{\partial A}{\partial t}=2|\alpha|^2+g^{ij}\frac{\partial
  \alpha_{ij}}{\partial t}
$$
where $|\alpha|^2=g^{ij}g^{kl}\alpha_{ik}\alpha_{jl}$. To simplify
notations, we let $\beta_{ij}:= \frac{\partial \alpha_{ij}}{\partial
  t}$ and $B:= g^{ij}\beta_{ij}$ so that
\begin{align}\label{A and B}
  \frac{\partial A}{\partial t}=2|\alpha|^2+B.
\end{align}
In particular, $A=R$ and $B=\Delta R$ under the Ricci flow.

For any time-dependent vector field $V$ on $M$ we define
\begin{align}\label{Theta definition}
  \Theta_{g,\alpha}\left(V\right):=\left({\Rc}-\alpha\right)(V,V)
  +\langle\nabla A-2\Div(\alpha),V\rangle+\frac 1 2\left(B-\Delta
    A\right)
\end{align}
where $\Rc$ is the Ricci tensor and $\Div$ the divergence operator,
i.e. $\Div(\alpha)_k=g^{ij}\nabla_i\alpha_{jk}$.  In the rest of this
paper we omit the subscripts of $\Theta_{g,\alpha}\left(V\right)$ and
denote it by $\Theta(V)$.

The quantity $\Theta(V)$ appears as an error term in our main results.
In the expression of $\Theta(V)$, the $\Rc$ term is caused by the
Bochner's formula. This explains technically why our results are
particularly useful for the Ricci flow and its various modifications.
In \cite{M} M\"{u}ller introduced the quantity $\mathcal D$.  In our
notations M\"{u}ller's definition reads as
$$
\mathcal D(V)=\partial_t A-\Delta
A-2|\alpha_{ij}|^2+4\nabla_i\alpha_{ij}V_j-2\nabla_j A
V_j+2R_{ij}V_iV_j-2\alpha_{ij}V_iV_j.
$$
Note that $\mathcal D$ and $\Theta$ are essentially the same; indeed
$\mathcal D(V)=2\Theta(-V)$. M\"uller \cite{M} further explained that
$\mathcal D$ is the difference between two differential Harnack type
quantities for the tensor $\alpha$.

Let $u(t, y)$ be a nonnegative solution to the conjugate heat equation
\begin{equation}\label{general conjugate heat equation}
  \frac{\partial u(t,y)}{\partial t}=-\Delta u(t,y)+A(t,y)\,u(t,y),
  \quad t\in(0, T),\ y\in M,
\end{equation}
where $\Delta$ is the Laplace-Beltrami operator calculated with
respect to the evolving metric $g(t)$.  Note that $\int_M u(t, y)\,dy$
remains constant along the flow, and without loss of generality we
assume this constant to be 1.

The classical Boltzmann-Shannon entropy functional is defined by
\begin{equation}\label{B-S entropy}
  \E(t)=\int_M u(t,y)\log u(t,y) \,dy.
\end{equation}

If $\Theta(V)\ge 0$ for all $V$, we will show that $\E$ is
convex. Based on this observation we construct Perelman's $\F$ and
$\W$ entropy in abstract geometric flows. We then derive the explicit
evolution equations of the entropies along the conjugate heat
equation, and show that they are monotone if $\Theta\ge 0$.  We thus
present a unified formula of various $\W$ entropies established by
various authors for different flows (including the static case), see
\cite{F-I-N, Li, L, M2, N, P}.

We show indeed that the generalized entropy $\F_k$ $(k\ge1)$, see
Definition \ref{Definition_F-definition} below, is monotone under the
additional assumption $B-\Delta A\ge0$, which is satisfied by all
previously mentioned flows. The study of the $\F_k$ entropy leads to a
simpler argument to rule out nontrivial steady breathers.

The eigenvalues and eigenfunctions of the Laplace-Beltrami operator
with potential $cA$ where $c$ is a constant, satisfy
\begin{equation}\label{eigenvalue definition}
  \lambda(t)f(t,y)=-\Delta f(t,y)+cA(t,y)f(t,y).
\end{equation}

Let $\lambda(t)$ be the lowest eigenvalue. We shall determine the
derivative of $\lambda(t)$.  A remarkable fact is that the derivative
$\lambda'(t)$ does not depend on the time derivative of the
corresponding eigenfunction; this allows to establish a formula for
$\lambda'(t)$ not requiring knowledge of the eigenfunction evolution.
We will prove eigenvalue monotonicity and apply it to rule out
nontrivial steady and expanding breathers in various flows.

\section{The first two derivatives of Boltzmann-Shannon entropy}

In this section we calculate the first two derivatives of the
Boltzmann-Shannon entropy.
\begin{theorem}\label{B-S entropy derivatives}
  Suppose that $(M, g(t))$ is a solution to the abstract geometric
  flow \eqref{metric evolving}, and that $u(t,y)$ is a positive
  solution to the conjugate heat equation \eqref{general conjugate
    heat equation}, normalized by $\int_M u(t,y) \,dy=1$. Then the
  first two derivatives of $\E(t)$ are given by
  \begin{equation}\label{1st derivative of B-S}
    \E'(t)=\int_M(|\nabla\log u|^2+A)u \,dy
  \end{equation}
  and
  \begin{align}\label{2nd derivative of B-S}
    \E''(t)=\int_M2\left(|\alpha-\nabla\nabla\log
      u|^2+\Theta(\nabla\log u)\right)u \,dy.
  \end{align}
  In particular, if $\Theta$ is nonnegative then $\E(t)$ is convex in
  time.
\end{theorem}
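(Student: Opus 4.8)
The plan is to compute $\E'(t)$ and $\E''(t)$ by differentiating under the integral sign, carefully tracking the three sources of $t$-dependence: the density $u$, the volume form $dy$, and the metric $g(t)$ entering $\nabla$, $\Delta$ and $A$. For the first derivative I would write
\begin{equation*}
  \E'(t)=\int_M\left(\partial_t u\,\log u+\partial_t u\right)dy+\int_M u\log u\,\partial_t(dy).
\end{equation*}
Using $\partial_t u=-\Delta u+Au$, $\partial_t(dy)=-A\,dy$, and $\int_M\partial_t u\,dy=0$ (which follows from the normalization and is consistent with the conjugate heat equation), the $A\log u$ and $-A\log u$ contributions cancel, leaving $\int_M\left(-\Delta u\cdot\log u+A u\right)dy$. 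An integration by parts on the closed manifold $M$ turns $-\int_M\Delta u\cdot\log u\,dy$ into $\int_M\langle\nabla u,\nabla\log u\rangle\,dy=\int_M|\nabla\log u|^2u\,dy$, giving \eqref{1st derivative of B-S}.

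For the second derivative I would differentiate $\E'(t)=\int_M\left(|\nabla\log u|^2+A\right)u\,dy$, again accounting for the evolving metric in $|\nabla\log u|^2=g^{ij}\nabla_i\log u\,\nabla_j\log u$ (so $\partial_t g^{ij}=2\alpha^{ij}$ contributes a term $2\alpha(\nabla\log u,\nabla\log u)u$), in $A$ (contributing $\partial_t A=2|\alpha|^2+B$ by \eqref{A and B}), and in $dy$. It is convenient to set $f:=\log u$, so that $f$ satisfies a nonlinear conjugate heat equation $\partial_t f=-\Delta f-|\nabla f|^2+A$, and to use this to substitute for $\partial_t f$ wherever it appears. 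The heart of the computation is the Bochner-type manipulation: terms like $\int_M\langle\nabla f,\nabla\Delta f\rangle u\,dy$ and $\int_M\Delta|\nabla f|^2\,u\,dy$ must be combined, and Bochner's formula $\tfrac12\Delta|\nabla f|^2=|\Hess f|^2+\langle\nabla f,\nabla\Delta f\rangle+\Rc(\nabla f,\nabla f)$ is precisely what produces the $\Rc(\nabla\log u,\nabla\log u)$ term appearing in $\Theta$. After integrating by parts to move all derivatives appropriately and collecting terms, one expects the full-square term $|\alpha-\nabla\nabla\log u|^2$ to emerge by completing the square in $|\Hess f|^2-2\langle\alpha,\Hess f\rangle+|\alpha|^2$, and the remaining terms — $(\Rc-\alpha)(\nabla f,\nabla f)$, $\langle\nabla A-2\Div(\alpha),\nabla f\rangle$, and $\tfrac12(B-\Delta A)$ — to assemble exactly into $\Theta(\nabla\log u)$ as in \eqref{Theta definition}. (The Laplacian terms integrate to zero against $u\,dy$ only after the $dy$-evolution is folded in, which is why the $-\Delta A$ appears rather than a bare $B$.)

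The main obstacle is the bookkeeping in the second-derivative computation: there are several integrations by parts on a manifold with a $t$-dependent metric, and one must be scrupulous that the divergence identities still hold (they do, since $M$ is closed, but the volume form is evolving, so one cannot blithely discard "total derivative" terms without first extracting the $\partial_t(dy)=-A\,dy$ contribution). In particular the terms $\int_M\langle\nabla\alpha\text{-contractions},\nabla f\rangle u\,dy$ that produce $\Div(\alpha)$ and the term producing $\nabla A$ require care about which indices are contracted and the Bianchi-type identities. Once \eqref{2nd derivative of B-S} is established, the final assertion is immediate: if $\Theta(V)\ge0$ for all $V$, then the integrand $2\bigl(|\alpha-\nabla\nabla\log u|^2+\Theta(\nabla\log u)\bigr)u$ is nonnegative (as $u>0$), hence $\E''(t)\ge0$ and $\E$ is convex.
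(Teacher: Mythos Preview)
Your proposal is correct and follows essentially the same approach as the paper: differentiate under the integral, substitute the conjugate heat equation and the volume-form evolution, invoke Bochner's formula, and then complete the square while using a divergence identity to convert the $\langle\alpha,\nabla\nabla\log u\rangle$ cross-term into the $\Div(\alpha)$ and $(\Rc-\alpha)$ pieces of $\Theta$. The only cosmetic difference is that the paper keeps everything in terms of $u$ and $\frac{u_t}{u}$ rather than passing to $f=\log u$, but the computation is step-for-step the same.
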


\begin{proof}
  Since $M$ is closed we can integrate by parts freely. Direct
  calculations show that
  \begin{align*}
    \E'(t)&=\int_M\left( u_t\log u+u_t-Au\log u \right)\,dy\\
    &=\int_M\left(-\Delta u+Au\right)\log u-\Delta u+Au-Au\log u \,dy\\
    &=\int_M\left(-\Delta u\log u+Au\right)\,dy\\
    &=\int_M\left(\left|\nabla\log u\right|^2+A\right)u\,dy
  \end{align*}
  and
  \begin{align*}
    \E''(t)=\int_M&\frac{\partial(|\nabla\log u|^2+A)}{\partial t}u+(|\nabla\log u|^2+A)\frac{\partial u}{\partial t}-(|\nabla\log u|^2+A)uA \,dy\\
    =\int_M&\left(2\alpha(\nabla\log u,\nabla\log u)+2\langle \nabla\frac{u_t}{u},\nabla\log u\rangle +2|\alpha|^2+B\right)u\\
    &+\left(\left|\nabla\log u\right|^2+A\right)\left(-\Delta u+Au\right)-\left(\left|\nabla\log u\right|^2+A\right)uA \,dy\\
    =\int_M&\left(2\alpha(\nabla\log u,\nabla\log u)+2\langle \nabla\left(-\frac{\Delta u}{u}+A\right),\nabla\log u\rangle \right)u\\
    &+\left(2|\alpha|^2+B\right)u-\left(\left|\nabla\log u\right|^2+A\right)\Delta u \,dy\\
    =\int_M&2u\alpha(\nabla\log u,\nabla\log u)-2\langle\nabla\left(\frac{\Delta u}{u}\right),\nabla u\rangle+2\langle\nabla A,\nabla u\rangle\\
    &+2u|\alpha|^2+Bu-\Delta(|\nabla\log u|^2) u-\Delta A u \,dy.
  \end{align*}
  Plugging in $\Delta\log u=\frac{\Delta u}{u}-|\nabla\log u|^2$ and
  \begin{align*}
    \Delta(|\nabla\log u|^2)=2|\nabla\nabla\log u|^2&+2\Rc(\nabla\log
    u, \nabla\log u)\\ &+2\langle \nabla\log u,\nabla(\Delta\log
    u)\rangle,
  \end{align*}
  we have
  \begin{align*}
    \E''(t)
    =\int_M&2u\left(|\alpha|^2+|\nabla\nabla\log u|^2\right)+2u\left(\alpha+\Rc\right)(\nabla\log u,\nabla\log u)\\
    &+Bu-3\Delta A u \,dy\\
    =\int_M&2u|\alpha-\nabla\nabla\log u|^2+4u\langle \alpha,\nabla\nabla\log u\rangle\\
    &+2u\left(\alpha+\Rc\right)(\nabla\log u,\nabla\log u)+(B-\Delta
    A)u+2\langle\nabla A,\nabla u\rangle \,dy.
  \end{align*}
  By observing that
$$
\Div \left(u\alpha(\nabla\log u)\right) =\alpha(\nabla\log u,\nabla
u)+u\Div(\alpha)(\nabla\log u)+u\langle\alpha,\nabla\nabla\log
u\rangle
$$
and by the divergence theorem, we get
\begin{align*}
  \E''(t)=\int_M&2u|\alpha-\nabla\nabla\log u|^2+2u\left({\Rc}-\alpha\right)(\nabla\log u,\nabla\log u)\\
  &+(B-\Delta A)u+\langle2\nabla A-4\Div(\alpha),\nabla u\rangle \,dy
\end{align*}
which is exactly \eqref{2nd derivative of B-S}.
\end{proof}

\section{Examples where $\Theta$ and $B-\Delta A$ are nonnegative}

In the following we list some examples where $\Theta$ and $B-\Delta A$
are nonnegative.  Calculations on the Ricci flow and extended Ricci
flow are carried out in details.  For other examples, we list values
of $\Theta$ and $B-\Delta A$ and for details we refer to M\"uller's
paper \cite{M}. This section is organized in the same way as the
corresponding section in \cite{M}.  Recall that
$$
\Theta\left(V\right)=\left({\Rc}-\alpha\right)(V,V)+\langle\nabla
A-2\Div(\alpha),V\rangle+\frac 1 2\left(B-\Delta A\right).
$$

\subsection{Riemannian manifold.}

In the case of a static metric we have $\alpha=0$ and hence
\begin{align}\label{Theta in Riemannian manifold}
  \Theta(V)=\Rc(V, V), \quad B-\Delta A=0.
\end{align}
Thus $\Theta$ is nonnegative if $M$ has nonnegative Ricci curvature.

\subsection{Hamilton's Ricci flow.}

In the case of Ricci flow where $\alpha=\Rc$, we have $A=R$. The
evolution equation $\frac{\partial R}{\partial t}=2|{\Rc}|^2+\Delta R$
gives
$$B=\frac{\partial A}{\partial t}-2|\alpha|^2=\Delta R.$$ Notice that
$\nabla R=2\Div({\Rc})$ by the second Bianchi identity, we thus get
\begin{align}\label{Theta in Ricci flow}
  \Theta(V)=0, \quad B-\Delta A=0.
\end{align}

\subsection{List's extended Ricci flow.}

In \cite{L} Bernhard List introduced an extended Ricci flow system,
namely
\begin{align}\label{extended Ricci flow equation}
  \frac{\partial g}{\partial t}={-2\Rc}+2a_n \,\nabla v\otimes \nabla
  v
\end{align}
where $v$ is a solution to the heat equation $\frac{\partial
  v}{\partial t}=\Delta v$ and $a_n$ a positive constant depending
only on the dimension $n$ of the manifold.  It turns out that one can
exhibit List's flow as a Ricci-DeTurck flow in one higher dimension.
This connection has been observed by Jun-Fang Li according to
\cite{AW}.  The extended Ricci flow is interesting by itself since its
stationary points are solutions to the vacuum Einstein equations, and
it is desirable to work on this flow directly.

In our notations for the extended Ricci flow,
$\alpha={\Rc}-a_n\,dv\otimes dv$ and $A=R-a_n|\nabla v|^2$, which
gives
$$\nabla A=\nabla R-2a_n\nabla\nabla v(\nabla v, \cdot).$$
Since $\Div(dv\otimes dv)_k=g^{ij}\nabla_i(\nabla_jv
\nabla_kv)=(\Delta v)\nabla_kv+g^{ij}\nabla_jv\nabla_i\nabla_kv$ we
have
$$
\Div\alpha=\Div{\Rc}-a_n\Div(dv\otimes dv)=\frac 1 2 \nabla
R-a_n\left(\Delta v\nabla v+\nabla\nabla v(\nabla v,\cdot)\right).
$$
Thus we find
\begin{equation}\label{nabla A - 2 Div alpha in extended Ricci flow}
  \nabla A-2\Div(\alpha)=2a_n\Delta v\nabla v.
\end{equation}
The evolution equation of $\alpha$ is given by (cf.~\cite{L})
$$
\beta_{ij}=\frac{\partial \alpha_{ij}}{\partial t}=\Delta
\alpha_{ij}-R_{ip}\alpha_{pj}-R_{jp}\alpha_{pi}+2R_{ipqj}\alpha_{pq}+2a_n\,\Delta
v\nabla_i \nabla_j v.
$$
(Note that by our notation $R_{ij}=g^{pq}R_{ipqj}$, while many authors
including List write $R_{ij}=-g^{pq}R_{ipqj}$.) Hence we have
$B=\Delta A+2a_n(\Delta v)^2$ and
\begin{align}\label{B - Delta A in extended Ricci flow}
  B-\Delta A=2a_n(\Delta v)^2.
\end{align}
Plugging in our formula of $\Theta$ we arrive at
\begin{align*}
  \Theta(V) &=a_n\langle\nabla v,V\rangle^2+2a_n\,\Delta v\langle\nabla
  v,V\rangle+a_n(\Delta v)^2\\
  &=a_n\left(\langle\nabla v,V\rangle+\Delta
    v\right)^2.
\end{align*}

\subsection{M\"uller's Ricci flow coupled with harmonic map flow}
The Ricci flow coupled with an harmonic map flow was introduced by
M\"uller in \cite{M2} as a generalization of the extended Ricci flow.
Suppose that $(N,\gamma)$ is a further closed static Riemannian
manifold, $a(t)$ a nonnegative function depending only on time, and
$\varphi(t)\colon M\to N$ a family of $1$-parameter smooth maps. Then
$(g(t),\varphi(t))$ is called a solution to M\"uller's Ricci flow
coupled with harmonic map flow with coupling function $a(t)$, if it
satisfies
\begin{equation}\label{RH flow equation}
  \left\{
    \begin{aligned}  &\frac{\partial g}{\partial t}=-2{\Rc}+2a(t)\,\nabla\varphi\otimes\nabla\varphi\\
      &\frac{\partial
        \varphi}{\partial t}=\tau_g\varphi
    \end{aligned}
  \right.
\end{equation}
where $\tau_g$ denotes the tension field of the map $\varphi$ with
respect to the evolving metric~$g(t)$ and
$\nabla\varphi\otimes\nabla\varphi\equiv\varphi^*\gamma$ the pullback
of the metric $\gamma$ on $N$ via the map~$\varphi$.

Recall that $\mathcal D(V)=2\Theta(-V)$; we have (cf.~\cite{M})
\begin{align}\label{Theta in RH flow}
  B-\Delta A= 2a\,|\tau_g\varphi|^2-a'|\nabla\varphi|^2,\quad
  \Theta(V)=a\,|\tau_g\varphi+\nabla_V\varphi|^2-\frac{a'}{2}|\nabla\varphi|^2.
\end{align}
Thus both $B-\Delta A$ and $\Theta$ are nonnegative as long as $a(t)$
is non-increasing in time.

\subsection{Lorentzian mean curvature flow when the ambient space has nonnegative sectional curvature}
Let $L^{n+1}$ be a Lorentzian manifold, and $M(t)$ be a family of
space-like hypersurfaces of $L$.  Denote by $\nu$ the future-oriented
time-like unit normal vector of $M$, by $h_{ij}$ the second
fundamental form, and by $H$ its mean curvature. Let $F(t, y)$ be the
position function of $M$ in $L$. The Lorentzian mean curvature flow is
then defined by
\begin{align}\label{Lorentzian mean curvature flow equation}
  \frac{\partial F}{\partial t}=H\nu.
\end{align}
The induced metric $g(t)$ of $M(t)$ satisfies $\partial_t
g=2Hh_{ij}$. We have
\begin{align}\label{Theta in Lorentzian mean curvature flow}
  &B-\Delta A=2H^2|h|^2+2|\nabla H|^2+2H^2\,\overline{\Rc}(\nu,\nu),\\
  &\Theta(V)=|\nabla H+
  h(V,\cdot)|^2+\overline{\Rc}(H\nu+V,H\nu+V)+\overline{\operatorname{Rm}}(V,\nu,\nu,V)\nonumber
\end{align}
where $\overline{\Rc}$ and $\overline{\operatorname{Rm}}$ denote the
Ricci, resp.~Riemann curvature tensor of $L^{n+1}$. Obviously both
$B-\Delta A$ and $\Theta$ are nonnegative when the sectional curvature
of $L^{n+1}$ is nonnegative.

\section{Perelman's $\F_k$ functional in abstract geometric flows}

We proved the following. If $(M, g(t))$ is a solution to the abstract
flow equation \eqref{metric evolving} and $u$ a positive solution to
the conjugate heat equation \eqref{general conjugate heat equation}
then
\begin{align}\label{derivative of F-functional -1}
  \frac{d}{dt}\int_M(|\nabla\log u|^2+A)u
  \,dy=\int_M2\left(|\alpha-\nabla\nabla\log u|^2+\Theta(\nabla\log
    u)\right)u \,dy.
\end{align}
We note that
\begin{align}\label{derivative of integration of Au}
  \frac{d}{dt}\int_MAu\,dy&=\int_M\frac{\partial A}{\partial
    t}u+A\frac{\partial u}{\partial t}-A^2u\,dy\\\nonumber
  &=\int_M\left(2|\alpha|^2+B\right)u+A\left(-\Delta
    u+Au\right)-A^2u\,dy\\\nonumber &=\int_M2\left(|\alpha|^2+\frac 1
    2\left(B-\Delta A\right)\right)u \,dy.
\end{align}
Let $\phi:=-\log u$ then
\begin{align}\label{phi equation}
  \frac{\partial\phi}{\partial t}=-\Delta\phi+|\nabla\phi|^2-A
\end{align}
with constraint $\int_M e^{-\phi}\,dy=1$. We rewrite
Eq.~\eqref{derivative of F-functional -1} in the more familiar form
following Perelman's notations:
\begin{align}\label{F 1 derivative}
  \frac{d}{dt}\int_M(|\nabla\phi|^2+A)e^{-\phi}
  \,dy=\int_M2\left(|\alpha+\nabla\nabla\phi|^2
    +\Theta(-\nabla\phi)\right)e^{-\phi}\,dy.
\end{align}

\begin{definition}\label{Definition_F-definition}
  For any $\phi\in C^\infty(M)$ with $\int_M e^{-\phi}\,dy=1$ and any
  constant $k$ we define Perelman's $\F_k$-functional for abstract
  geometric flows by
  \begin{align}\label{F-definition}
    \F_k(g,\phi)=\int_M\left(|\nabla\phi|^2+kA\right)e^{-\phi} \,dy.
  \end{align}
  When $k=1$ we simply denote $\F_1$ by $\F$.
\end{definition}

For Perelman's $\F_k$-functional in an abstract geometric flow we have
the following.
\begin{theorem}\label{theorem of F monotonicity}
  If $g$ is a solution of the abstract geometric flow \eqref{metric
    evolving} and $\phi$ a solution to Eq.~\eqref{phi equation} then
  we have
  \begin{align}\label{F k derivative}
    \frac{d}{dt}\F_k=&\int_M 2 \left(|\alpha+\nabla\nabla\phi|^2 +
      (k-1)|\alpha|^2 \right) e^{-\phi}\\ \nonumber
    &+2\left(\Theta(-\nabla\phi)+\frac{k-1}{2}\left(B-\Delta
        A\right)\right)e^{-\phi} \,dy.
  \end{align}
  Thus for $k>1$, $\F_k$ is monotone nondecreasing as long as
  $B-\Delta A$ and $\Theta$ are nonnegative.  Moreover the
  monotonicity is strict unless
  \begin{align*}
    \alpha=0, \quad \phi=\operatorname{constant}, \quad B-\Delta A=0.
  \end{align*}
  For $k=1$ we have
  \begin{align}\label{F derivative}
    \frac{d}{dt}\F=\int_M 2 \left(|\alpha+\nabla\nabla\phi|^2
      +\Theta(-\nabla\phi) \right) e^{-\phi}\,dy.
  \end{align}
  In particular, $\F$ is monotone nondecreasing when $\Theta\ge 0$,
  and the monotonicity is strict unless
  \begin{align*}
    \alpha+\nabla\nabla\phi=0, \quad \Theta(-\nabla\phi)=0.
  \end{align*}
\end{theorem}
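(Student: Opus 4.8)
The plan is to derive \eqref{F k derivative} by exploiting linearity of $\F_k$ in its ``static'' part. Observe that
\[
\F_k(g,\phi)=\F(g,\phi)+(k-1)\int_M A\,e^{-\phi}\,dy,
\]
so that, differentiating in $t$,
\[
\frac{d}{dt}\F_k=\frac{d}{dt}\F+(k-1)\,\frac{d}{dt}\int_M A\,e^{-\phi}\,dy .
\]
For the first term I would invoke \eqref{F derivative} (equivalently \eqref{F 1 derivative}); for the second, I would apply \eqref{derivative of integration of Au} with $u=e^{-\phi}$, which is legitimate because $u=e^{-\phi}$ solves the conjugate heat equation \eqref{general conjugate heat equation} precisely when $\phi$ solves \eqref{phi equation}. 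Substituting both formulas and regrouping the integrand ($|\alpha+\nabla\nabla\phi|^2$ with $(k-1)|\alpha|^2$, and $\Theta(-\nabla\phi)$ with $\tfrac{k-1}{2}(B-\Delta A)$) yields exactly \eqref{F k derivative}. The case $k=1$ is then simply \eqref{F derivative} restated, so no separate work is needed there.

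Next I would read off the monotonicity. For $k>1$, under the hypotheses $\Theta\ge0$ and $B-\Delta A\ge0$ every summand of the integrand in \eqref{F k derivative} is pointwise nonnegative: $|\alpha+\nabla\nabla\phi|^2$ and $(k-1)|\alpha|^2$ trivially, $\Theta(-\nabla\phi)$ by assumption, and $\tfrac{k-1}{2}(B-\Delta A)$ because $k>1$. Hence $\frac{d}{dt}\F_k\ge0$. For the rigidity statement, $\frac{d}{dt}\F_k=0$ forces the integrand to vanish identically; using $k>1$ this gives $|\alpha|^2\equiv0$, hence $\alpha\equiv0$, and then $|\alpha+\nabla\nabla\phi|^2\equiv0$ forces $\nabla\nabla\phi\equiv0$, so $\phi$ is constant since $M$ is closed. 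Finally $\tfrac{k-1}{2}(B-\Delta A)\equiv0$ gives $B-\Delta A\equiv0$; note that then $\Theta(-\nabla\phi)=\Theta(0)=\tfrac12(B-\Delta A)=0$ automatically, so the list $\alpha=0$, $\phi=\mathrm{constant}$, $B-\Delta A=0$ is exactly the equality case. For $k=1$, \eqref{F derivative} shows $\frac{d}{dt}\F=0$ iff $\alpha+\nabla\nabla\phi\equiv0$ and $\Theta(-\nabla\phi)\equiv0$.

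I do not expect a real obstacle here: the analytic content is already packaged in \eqref{F 1 derivative} and \eqref{derivative of integration of Au}, and the remainder is bookkeeping together with the elementary fact that a function with vanishing Hessian on a closed manifold is constant. The only point meriting a line of care is the equivalence, after the substitution $u=e^{-\phi}$, between ``$u$ solves \eqref{general conjugate heat equation}'' and ``$\phi$ solves \eqref{phi equation}'', which is what permits the use of \eqref{derivative of integration of Au} with $u=e^{-\phi}$; this is a one-line computation and is in any case recorded in \eqref{phi equation}.
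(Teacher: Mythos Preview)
Your proposal is correct and follows essentially the same route as the paper: the same decomposition $\F_k=\F+(k-1)\int_M A\,e^{-\phi}\,dy$, the same appeal to \eqref{F 1 derivative} and \eqref{derivative of integration of Au}, and the same rigidity analysis (the paper traces $\alpha+\nabla\nabla\phi=0$ and $\alpha=0$ to $\Delta\phi=0$ rather than $\nabla\nabla\phi=0$, but this is an immaterial variation). One small remark: when you invoke \eqref{F derivative} for the first term you are citing a formula that is part of the theorem itself, so in the write-up you should cite \eqref{F 1 derivative} directly, as you parenthetically note.
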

\begin{proof}
  Since
  \begin{align*}
    \F_k(g,\phi)=\int_M(|\nabla\phi|^2+A)e^{-\phi} \,dy+(k-1)\int_M A
    e^{-\phi}\,dy,
  \end{align*}
  and by Eqs.~\eqref{F 1 derivative} and \eqref{derivative of
    integration of Au} we immediately get formula~\eqref{F k
    derivative}.

  Furthermore for $k>1$, the functional $\F_k$ is monotone
  nondecreasing as long as $B-\Delta A$ and $\Theta$ are nonnegative.
  When $\frac{d}{dt}\F_k=0$, each term on the RHS of Eq.~\eqref{F k
    derivative} has to be identically zero. In particular we have
$$\alpha+\nabla\nabla\phi=0,\quad \alpha =0$$
which further implies $\Delta\phi=0$ on the closed manifold $M$, and
thus $\phi$ has to be a constant.  Now
$\Theta(-\nabla\phi)=\Theta(0)=(B-\Delta A)/2$ and $B-\Delta A=0$.

When $k=1$ the statement in the theorem is obvious.
\end{proof}

The advantage of $\F_k$ over $\F$ is that when $k>1$, extra terms in
$\F'_k$ can tell more about the manifold $M$. Li \cite{Li} has studied
$\F_k$ in the Ricci flow.  We state an analogous application of $\F_k$
to rule out nontrivial steady breathers in abstract geometric flows.

Recall that a breather of a geometric flow is a periodic solution
changing only by diffeomorphism and rescaling.  A solution $(M, g(t))$
is called a breather if there are a diffeomorphism $\eta\colon M\to
M$, a positive constant $c$ and times $t_1<t_2$ such that
\begin{align}\label{breather definition for RH flow}
  g(t_2)=c\,\eta^* g(t_1) ,\quad \alpha(t_2)=\eta^*\alpha(t_1).
\end{align}
When $c<1$, $c=1$ or $c>1$, the breather is called shrinking, steady
or expanding, respectively.

We now apply monotonicity of $\F_k$ to rule out nontrivial steady
breathers of abstract geometric flows.
\begin{corollary}\label{corrollary no steady breather in RH flow}
  Suppose that $(M, g(t))$ is a steady breather to an abstract
  geometric flow~\eqref{metric evolving}.  Suppose that $\Theta\ge0$
  and $B-\Delta A\ge0$. Then $B-\Delta A=0$ and the steady breather is
  $\alpha$-flat, namely $\alpha=0$.
\end{corollary}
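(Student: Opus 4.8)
The plan is to adapt Perelman's no-steady-breather argument, but using the functional $\F_k$ with a fixed $k>1$ rather than $\F=\F_1$, so that the rigidity clause of Theorem~\ref{theorem of F monotonicity} immediately forces $\alpha=0$ (working with $\F_1$ would only produce a gradient steady soliton structure, which would then need a separate triviality argument on the closed manifold). Fix $k>1$ and set
\begin{align*}
  \lambda_k(g):=\inf\left\{\F_k(g,\phi):\phi\in C^\infty(M),\ \int_M e^{-\phi}\,dy=1\right\}.
\end{align*}
The substitution $w=e^{-\phi/2}$ turns $\F_k(g,\phi)$ into $\int_M\bigl(4|\nabla w|^2+kAw^2\bigr)\,dy$ subject to $\int_M w^2\,dy=1$, so $\lambda_k(g)$ is the bottom of the spectrum of $-4\Delta+kA$ on the closed manifold $M$; hence it is finite and attained, and by elliptic regularity together with the sign of a first eigenfunction the minimizer may be taken smooth and positive, giving a smooth admissible minimizer $\phi_0$ for $\F_k(g,\cdot)$. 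Since $\F_k$ is a coordinate-free integral and the constraint is diffeomorphism invariant, $\lambda_k(\eta^*g)=\lambda_k(g)$ for every diffeomorphism $\eta\colon M\to M$.

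Next I would show that $t\mapsto\lambda_k(g(t))$ is nondecreasing. Fix $t_1<t_2$ and let $\phi_2$ realize $\lambda_k(g(t_2))$. Solving $\partial_t u=-\Delta u+Au$ backward on $[t_1,t_2]$ with terminal datum $u(t_2)=e^{-\phi_2}$ is a well-posed parabolic problem on compact $M$ (under $\tau=-t$ it is a forward heat-type equation); the solution stays positive, satisfies $\int_M u(t)\,dy=1$ for all $t$ since the conjugate heat equation preserves total mass, and $\phi(t):=-\log u(t)$ solves Eq.~\eqref{phi equation}. By Theorem~\ref{theorem of F monotonicity}, using $\Theta\ge0$, $B-\Delta A\ge0$ and $k>1$, the map $t\mapsto\F_k(g(t),\phi(t))$ is nondecreasing, whence
\begin{align*}
  \lambda_k(g(t_1))\le\F_k(g(t_1),\phi(t_1))\le\F_k(g(t_2),\phi(t_2))=\lambda_k(g(t_2)).
\end{align*}

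Finally, the steady breather hypothesis \eqref{breather definition for RH flow} with $c=1$ gives $g(t_2)=\eta^*g(t_1)$, hence $\lambda_k(g(t_2))=\lambda_k(g(t_1))$ and the displayed chain collapses to a string of equalities. Then $t\mapsto\F_k(g(t),\phi(t))$ is nondecreasing with equal endpoints on $[t_1,t_2]$, so it is constant there and $\frac{d}{dt}\F_k\equiv0$ on $[t_1,t_2]$; the rigidity clause of Theorem~\ref{theorem of F monotonicity} for $k>1$ then yields $\alpha\equiv0$, $\phi\equiv\operatorname{constant}$ and $B-\Delta A\equiv0$ on $[t_1,t_2]$, in particular $\alpha=0$ and $B-\Delta A=0$, as claimed. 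The main obstacle I expect is the analytic input, namely existence, smoothness and positivity of the minimizer of $\F_k(g,\cdot)$ and the backward-in-time solvability of the conjugate heat equation; both are routine on a closed manifold. A minor loose end is upgrading ``$\alpha=0$ on $[t_1,t_2]$'' to ``$\alpha\equiv0$'' for all times: on that interval $g$ is static, and forward uniqueness of the flow (immediate for the concrete flows of Section~3) propagates $\alpha\equiv0$.
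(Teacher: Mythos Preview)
Your argument is correct and follows essentially the same route as the paper: define $\lambda_k$ as the infimum of $\F_k$, use its diffeomorphism invariance together with monotonicity of $\F_k$ along the backward conjugate heat flow (Theorem~\ref{theorem of F monotonicity}) to trap $\lambda_k(t_1)=\lambda_k(t_2)$ between equal values, and invoke the $k>1$ rigidity clause to conclude $\alpha=0$ and $B-\Delta A=0$. Your write-up simply supplies more analytic detail (existence and positivity of the minimizer via the eigenvalue interpretation, and backward solvability) than the paper's sketch, which explicitly defers these to Perelman and Kleiner--Lott.
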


\begin{proof} The arguments are standard and follow from Perelman's
  proof of the no steady breather theorem for the Ricci flow \cite{P}.
  We follow the notes by Kleiner and Lott \cite{K-L} and only sketch
  the proof.  Define
  \begin{align}\label{lambda definition}
    \lambda(t)=\inf\left\{\F_k(g,\phi)\colon\ \int_M e^{-\phi}
      \,dy=1,\ \phi\in C^\infty(M)\right\}.
  \end{align}
  Since we are on a steady breather we have
  $\lambda(t_1)=\lambda(t_2)$. Let $\bar \phi(t_2)$ be a minimizer of
  $\lambda(t_2)$. Solve the conjugate heat equation backwards with end
  value $e^{-\bar \phi(t_2)}$. Denote the solution by $u(t)$.  Let
  $\phi(t)=-\log u(t)$ then $\phi(t)$ satisfies the constraint
$$\int_M e^{-\phi} \,dy=1,$$
and $\F_k(g(t), \phi(t))$ is monotone nondecreasing as its derivative
is nonnegative when $e^{-\phi(t)}$ is a solution to the conjugate heat
equation.  Thus we have
\begin{equation}\label{eigenvalue monotonicity inequality}
  \lambda(t_1)\le \F_k(g(t_1), \phi(t_1))\le\F_k(g(t_2),\bar \phi(t_2))=\lambda(t_2).
\end{equation}
Since on a breather $\lambda(t_1)=\lambda(t_2)$, we get
$$
\mathcal F_k(g(t_1), \phi(t_1))=\mathcal F_k(g(t_2), \phi(t_2)),
$$
and in particular $\F'_k\left(g(t),\phi(t)\right)=0$ when
$t\in[t_1,t_2]$. Now we apply Theorem~\ref{theorem of F monotonicity}
to conclude that $\alpha=0$ and $B-\Delta A=0$ on $M$ when
$t\in[t_1,t_2]$.
\end{proof}

\begin{remark}\label{eigenvalue monotonicity remark}
  From Eq.~\eqref{lambda definition} we know that $\lambda$ is the
  lowest eigenvalue of $-\Delta+\frac k 4 A$.  Thus, by
  Theorem~\textup{\ref{theorem of F monotonicity}}, under the
  assumptions that $B-\Delta A\ge 0$ and $\Theta\ge0$, the lowest
  eigenvalue of $-\Delta+\frac k 4 A$ is monotone in $t$ when $k\ge
  1$.  An explicit formula for the derivative of the lowest eigenvalue
  will be given in Sect.~$7$ under the assumption that $\lambda$ is
  differentiable along time.
\end{remark}
\section{Construction of Perelman's $\W$ entropy}
We have noted that Perelman's $\F$-functional is the derivative of
$\E$, whose stationary points are steady solitons.  The purpose of
this section is to construct functionals corresponding to the
shrinking solitons.  Our construction is just completing squares of
$\E''$ (or $\F'$ by Perelman's notation). Monotonicity of $\W$ holds
in the flows mentioned in Section 3.

We rewrite the second derivative of $\E(t)$ in order to fit the
shrinking soliton equation simply by completing squares.
\begin{align*}
  \E''(t)=\int_M&2\left(|\alpha-\nabla\nabla\log u|^2+\Theta(\nabla\log u)\right)u \,dy\\
  =\int_M&2u\left|\alpha-\nabla\nabla\log u-\frac 1{2(T-t)}g\right|^2+\frac{2u}{T-t}(A-\Delta\log u)\\
  &-\frac{2nu}{4(T-t)^2}+2u\Theta(\nabla\log u) \,dy\\
  =\int_M&2\left(\left|\alpha-\nabla\nabla\log u-\frac 1{2(T-t)}g\right|^2+\Theta(\nabla\log u)\right)u\,dy\\
  &+\frac 2{T-t}\E'(t)-\frac n{2(T-t)^2}\,.
\end{align*}
Hence we have
\begin{align*}
  \int_M&2\left(\left|\alpha-\nabla\nabla\log u-\frac 1{2(T-t)}g\right|^2+\Theta(\nabla\log u)\right)u\,dy\\
  &=\E''(t)-\frac 2{T-t}\E'(t)+\frac n{2(T-t)^2}\\
  &=\frac{1}{T-t}\frac{d}{dt}\left((T-t)\E'-\E-\frac n 2
    \log(T-t)\right).
\end{align*}
Now in terms of
\begin{align*}
  \W := (T-t) \E'-\E-\frac n 2 \log(T-t)-\frac n 2 \log(4\pi)-n,
\end{align*}
we proved that
\begin{equation}\label{W derivative in terms of u}
  \frac{d}{dt}\W=(T-t)\int_M2\left(\left|\alpha-\nabla\nabla\log u-\frac 1{2(T-t)}g\right|^2+\Theta(\nabla\log u)\right)u\,dy.
\end{equation}

Following Perelman, we let $$\tau:= T-t,\quad \phi:= -\log
\left(\left(4\pi\tau\right)^{n/2}u\right)$$ and introduce the
following definition.

\begin{definition}\label{W definition in abstract geometric flows}
  For a solution $(M, g)$ to the abstract geometric flow \eqref{metric
    evolving} and for $\phi\in C^\infty(M)$, let Perelman's
  $\W$-entropy be defined as
  \begin{align}\label{W in terms of phi}
    \W(g,\phi, t)=\int_M\left(\tau\left(\left|\nabla
          \phi\right|^2+A\right)+\phi-n\right)\left(4\pi\tau\right)^{-n/2}e^{-\phi}\,dy.
  \end{align}
\end{definition}

We can rewrite Eq.~\eqref{W derivative in terms of u} in the following
way.

\begin{theorem}\label{shrinking entropy monotonicity}
  Let $(M, g)$ be a solution to the abstract geometric
  flow~\eqref{metric evolving}. If $\phi$ satisfies
  \begin{align*}
    \frac{\partial \phi}{\partial t}=-\Delta \phi+|\nabla
    \phi|^2-A+\frac{n}{2\tau}
  \end{align*}
  such that
  \begin{align*}
    \int_M(4\pi\tau)^{-n/2}e^{-\phi}\,dy=1,
  \end{align*}
  then
  \begin{align*}
    \frac{d}{dt}\W=\int_M2\tau\left(\left|\alpha+\nabla\nabla
        \phi-\frac 1{2\tau}g\right|^2 +\Theta(-\nabla
      \phi)\right)(4\pi\tau)^{-n/2}e^{-\phi}\,dy.
  \end{align*}
  If $\Theta\ge 0$ then $\W$ is monotone nondecreasing, and the
  monotonicity is strict unless
  \begin{align*}
    \alpha+\nabla\nabla \phi-\frac 1{2\tau}g=0, \quad \Theta(-\nabla
    \phi)=0.
  \end{align*}
\end{theorem}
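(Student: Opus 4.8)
The plan is to obtain the evolution formula for $\W$ by a change of variables in the identity \eqref{W derivative in terms of u}, which was already established (via Theorem~\ref{B-S entropy derivatives} and completing the square) for every positive solution $u$ of the conjugate heat equation \eqref{general conjugate heat equation} normalized by $\int_M u\,dy=1$. So the real content is only to check that the hypotheses imposed on $\phi$ are exactly the hypotheses under which \eqref{W derivative in terms of u} holds, after setting $u:=(4\pi\tau)^{-n/2}e^{-\phi}$; note that since $\phi\in C^\infty(M)$ this $u$ is automatically positive.

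First I would verify that the evolution equation for $\phi$ is equivalent to $u$ solving \eqref{general conjugate heat equation}. Since $\phi$ and $-\log u$ differ by the spatially constant function $-\frac n2\log(4\pi\tau)$, we have $\nabla\phi=-\nabla\log u$ and $\Delta\phi=-\Delta\log u$; together with $\Delta\log u=\frac{\Delta u}{u}-|\nabla\log u|^2$ this gives $-\Delta\phi+|\nabla\phi|^2=\frac{\Delta u}{u}$. On the other hand $\partial_t\phi=-\frac{u_t}{u}+\frac{n}{2\tau}$, because $\frac{d}{dt}\big(\frac n2\log(4\pi\tau)\big)=-\frac{n}{2\tau}$. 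Substituting both expressions into $\partial_t\phi=-\Delta\phi+|\nabla\phi|^2-A+\frac{n}{2\tau}$ and cancelling the term $\frac{n}{2\tau}$ yields $-\frac{u_t}{u}=\frac{\Delta u}{u}-A$, i.e.\ $u_t=-\Delta u+Au$. Likewise, the constraint $\int_M(4\pi\tau)^{-n/2}e^{-\phi}\,dy=1$ is just $\int_M u\,dy=1$, which is preserved along the conjugate heat flow as noted after \eqref{general conjugate heat equation}.

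Next I would rewrite \eqref{W derivative in terms of u} in the $\phi$ variable. From $\nabla\log u=-\nabla\phi$ we get $\nabla\nabla\log u=-\nabla\nabla\phi$, so $\alpha-\nabla\nabla\log u-\frac1{2\tau}g=\alpha+\nabla\nabla\phi-\frac1{2\tau}g$, while $\Theta(\nabla\log u)=\Theta(-\nabla\phi)$ by direct substitution into \eqref{Theta definition}. Combining this with $u\,dy=(4\pi\tau)^{-n/2}e^{-\phi}\,dy$ and $T-t=\tau$ turns the right-hand side of \eqref{W derivative in terms of u} into precisely $\int_M 2\tau\big(|\alpha+\nabla\nabla\phi-\frac1{2\tau}g|^2+\Theta(-\nabla\phi)\big)(4\pi\tau)^{-n/2}e^{-\phi}\,dy$, the asserted formula.

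Finally, monotonicity and rigidity are immediate: for $t<T$ we have $\tau=T-t>0$ and $(4\pi\tau)^{-n/2}e^{-\phi}>0$, the squared norm is nonnegative, and $\Theta(-\nabla\phi)\ge0$ whenever $\Theta\ge0$, so $\frac{d}{dt}\W\ge0$; equality forces the nonnegative integrand to vanish identically, hence $\alpha+\nabla\nabla\phi-\frac1{2\tau}g=0$ and $\Theta(-\nabla\phi)=0$ on $M$. I do not expect a genuine obstacle here. The only point requiring care is the bookkeeping in passing between $u$ and $\phi$ — in particular, checking that the extra $\frac{n}{2\tau}$ in the $\phi$-equation is exactly what compensates the $t$-dependence of the weight $(4\pi\tau)^{-n/2}$ — and this is the computation I would write out in full.
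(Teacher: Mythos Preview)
Your proposal is correct and follows exactly the paper's approach: the paper simply states that Theorem~\ref{shrinking entropy monotonicity} is Eq.~\eqref{W derivative in terms of u} rewritten via the substitution $\tau=T-t$, $\phi=-\log\big((4\pi\tau)^{n/2}u\big)$, and you have spelled out precisely the bookkeeping that makes this substitution work.
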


The monotonicity of $\W$ can be applied to rule out nontrivial
shrinking breathers in abstract flows with $\Theta\ge 0$.  The
arguments are almost identical to the Ricci flow case. We omit
details.

\begin{remark}
Monotonicity of $\W$ was previously proven by Hong Huang in \cite{H}. The advantage of Theorem \ref{shrinking entropy monotonicity} is that explicit formula of $\frac{d\W}{d t}$ is given. We thank Professor Huang for bringing \cite{H} to our attention.
\end{remark}
\section{Expander entropy $\W_+$}
Feldman-Ilmanen-Ni \cite{F-I-N} established expander entropy $\W_+$
for Ricci flow, and there has been a very nice explanation of their
motivation in \cite{F-I-N}. We attempt to explain formally why $\W_+$
should be the way as they defined it. In short, the signs in $\W$ and
$\W_+$ are caused by antiderivatives of $1/(t-T)$ depending on the
situation whether $t>T$ or $t<T$.

We now carry out the details. Note that on expanders $t>T$ and that
\begin{align*}
  \E''(t)=\int_M&2\left(|\alpha-\nabla\nabla\log u|^2+\Theta(\nabla\log u)\right)u \,dy\\
  =\int_M&2u\left|\alpha-\nabla\nabla\log u+\frac 1{2(t-T)}g\right|^2-\frac{2u}{t-T}(A-\Delta\log u)\\
  &-\frac{2nu}{4(t-T)^2}+2u\Theta(\nabla\log u) \,dy\\
  =\int_M&2\left(\left|\alpha-\nabla\nabla\log u+\frac 1{2(t-T)}g\right|^2+\Theta(\nabla\log u)\right)u\,dy\\
  &-\frac 2{t-T}\E'(t)-\frac n{2(t-T)^2},
\end{align*}
moreover
\begin{align*}
  \int_M2&\left(\left|\alpha-\nabla\nabla\log u+\frac 1{2(t-T)}g\right|^2+\Theta(\nabla\log u)\right)u\,dy\\
  &\quad=\E''(t)+\frac 2{t-T}\E'(t)+\frac n{2(t-T)^2}\\
  &\quad=\frac{1}{t-T}\frac{d}{dt}\left((t-T)\E'+\E+\frac n 2
    \log(t-T)\right).
\end{align*}
The calculations suggest to define
$$\W_+:=(t-T)\E'+\E+\frac n 2 \log(t-T)+\frac n 2 \log(4\pi)+n$$
which is the definition of expander entropy in \cite{F-I-N} in the
case of Ricci flow.  One has
$$
\frac{d\W_+}{dt}=(t-T)\int_M2\left(\left|\alpha-\nabla\nabla\log
    u+\frac 1{2(t-T)}g\right|^2+\Theta(\nabla\log u)\right)u\,dy.
$$
This again may be rewritten following \cite{F-I-N} in terms of
$$\sigma:= t-T,\quad
\phi_+ := -\log\left(\left(4\pi\sigma\right)^{n/2}u\right).$$

\begin{definition}\label{W for expander definition in abstract geometric flows}
  For a solution $(M, g)$ to the abstract geometric flow \eqref{metric
    evolving} and $\phi_+ \in C^\infty(M)$ one defines Perelman's
  entropy for expanders by
  \begin{align}\label{W expander in terms of phi}
    \W_+(g,\phi_+, t)=\int_M\left(\sigma\,(|\nabla
      \phi_+|^2+A)-\phi_++n\right)(4\pi\sigma)^{-n/2}e^{-\phi_+}\,dy.
  \end{align}
\end{definition}

\begin{theorem}\label{expander entropy monotonicity}
  Let $(M, g)$ be a solution to the abstract geometric
  flow~\eqref{metric evolving}. Assume that $\phi_+$ satisfies
  \begin{align*}
    \frac{\partial \phi_+}{\partial t}=-\Delta \phi_++|\nabla
    \phi_+|^2-A-\frac{n}{2(t-T)}
  \end{align*}
  such that
  \begin{align*}
    \int_M(4\pi\sigma)^{-n/2}e^{-\phi_+}\,dy=1.
  \end{align*}
  We have
  \begin{equation*}
    \frac{d\W_+}{dt}=\int_M2\sigma\left(\left|\alpha+\nabla\nabla \phi_++\frac 1{2(t-T)}g\right|^2
      +\Theta(-\nabla \phi_+)\right)(4\pi\sigma)^{-n/2}e^{-\phi_+}\,dy.
  \end{equation*}
  Furthermore, if $\Theta\ge 0$ then $\W_+$ is monotone nondecreasing,
  and the monotonicity is strict unless
  \begin{align*}
    \alpha+\nabla\nabla \phi_++\frac 1{2(t-T)}g=0, \quad
    \Theta(-\nabla \phi_+)=0.
  \end{align*}
\end{theorem}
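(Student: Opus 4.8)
The plan is to mirror the proof of Theorem~\ref{shrinking entropy monotonicity}: reduce the whole statement to the $u$-level computation already carried out in this section via the substitution $u\leftrightarrow\phi_+$, and then transport every identity across. First I would set $u:=(4\pi\sigma)^{-n/2}e^{-\phi_+}$ with $\sigma=t-T$, so that $\log u=-\frac n2\log(4\pi\sigma)-\phi_+$, and hence $\nabla\log u=-\nabla\phi_+$ and $\nabla\nabla\log u=-\nabla\nabla\phi_+$. Using $\partial_t\log u=-\frac{n}{2(t-T)}-\partial_t\phi_+$ together with $\Delta u/u=\Delta\log u+|\nabla\log u|^2=-\Delta\phi_++|\nabla\phi_+|^2$, a one-line computation shows that the evolution equation imposed on $\phi_+$ is \emph{equivalent} to $u$ being a positive solution of the conjugate heat equation~\eqref{general conjugate heat equation}; at the same time the constraint $\int_M(4\pi\sigma)^{-n/2}e^{-\phi_+}\,dy=1$ becomes $\int_M u\,dy=1$. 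This is the key reduction, as it places us under the hypotheses of Theorem~\ref{B-S entropy derivatives}.

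Next I would verify that the functional of Definition~\ref{W for expander definition in abstract geometric flows}, namely~\eqref{W expander in terms of phi}, coincides under this substitution with the auxiliary expression $(t-T)\E'+\E+\frac n2\log(t-T)+\frac n2\log(4\pi)+n$ used earlier in the section: substituting $\log u=-\frac n2\log(4\pi\sigma)-\phi_+$ into $\E=\int_M u\log u\,dy$ and $\E'=\int_M(|\nabla\log u|^2+A)u\,dy$, and using $\int_M u\,dy=1$, the logarithmic constants cancel and one recovers exactly~\eqref{W expander in terms of phi}. With this identification in hand, I would invoke the displayed formula preceding Definition~\ref{W for expander definition in abstract geometric flows},
\[
\frac{d\W_+}{dt}=(t-T)\int_M 2\left(\left|\alpha-\nabla\nabla\log u+\frac{1}{2(t-T)}g\right|^2+\Theta(\nabla\log u)\right)u\,dy,
\]
which holds precisely because $u$ now solves the conjugate heat equation (so that Theorem~\ref{B-S entropy derivatives} and the completing-the-squares computation of this section apply). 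Substituting back $\nabla\log u=-\nabla\phi_+$, $\nabla\nabla\log u=-\nabla\nabla\phi_+$, $t-T=\sigma$ and $u=(4\pi\sigma)^{-n/2}e^{-\phi_+}$ converts the right-hand side into the asserted formula for $d\W_+/dt$.

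For the monotonicity part I would simply observe that on an expander $\sigma=t-T>0$ and the weight $(4\pi\sigma)^{-n/2}e^{-\phi_+}$ is strictly positive, so when $\Theta\ge0$ the integrand is pointwise nonnegative and $d\W_+/dt\ge0$; equality at some time forces the integrand to vanish identically there, i.e.\ $\alpha+\nabla\nabla\phi_++\frac{1}{2(t-T)}g=0$ and $\Theta(-\nabla\phi_+)=0$, which is the stated rigidity.

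The step I expect to require the most care is purely one of sign bookkeeping rather than of substance: the expander uses $\sigma=t-T$ where the shrinker of Theorem~\ref{shrinking entropy monotonicity} used $\tau=T-t$, so the shift term in the completed square and the $n/(2(t-T))$ term in the $\phi_+$-equation carry the opposite sign to their shrinker analogues, and the antiderivative of $1/(t-T)$ differs in sign from that of $1/(T-t)$. The safe route is to perform the completing-the-squares calculation once at the level of $u$ — as already done in the text preceding Definition~\ref{W for expander definition in abstract geometric flows} — and then push every sign through the fixed change of variables $u\leftrightarrow\phi_+$ mechanically, rather than re-deriving it directly for $\phi_+$, where a sign slip would most easily creep in.
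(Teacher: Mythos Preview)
Your proposal is correct and follows essentially the same approach as the paper: the text preceding Definition~\ref{W for expander definition in abstract geometric flows} carries out the completing-the-squares computation at the level of $u$ and derives $d\W_+/dt$ there, then simply remarks that everything ``may be rewritten following \cite{F-I-N} in terms of $\sigma:=t-T$, $\phi_+:=-\log((4\pi\sigma)^{n/2}u)$,'' with the theorem stated as the outcome of that substitution. Your write-up just spells out explicitly the verifications (equivalence of the $\phi_+$-evolution with the conjugate heat equation for $u$, and agreement of the two expressions for $\W_+$) that the paper leaves implicit.
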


\begin{remark}
  The constants $\pm\left( \frac n 2 \log(4\pi)+n\right)$ in the
  definition of $\W$ and $\W_+$ are for purposes of normalization.
\end{remark}

\section{Evolution equation of the lowest eigenvalue}
In this section, assuming that the lowest eigenvalue $\lambda(t)$ is
differentiable along ~$t$, we derive an explicit formula for its
derivative 
in terms of its normalized eigenfunction.  Although monotonicity of
$\F_k$ in Theorem \ref{theorem of F monotonicity} is sufficient for
our geometric applications, an explicit formula which holds at points
where $\lambda$ is differentiable, may be of independent interest.
Time derivatives of the eigenfunction do not appear in the formula.

In the literature, for instance \cite[Section~7]{K-L}, it has been
stated that smooth dependence on time of the lowest eigenvalue and the
corresponding eigenfunction follows from perturbation theory as
presented in the book by Reed and
Simon~\cite[Chapt.~XII]{reedsimon}. However it is not immediately
clear how perturbation theory is applied to our context, where the
operator depends only smoothly, but not analytically on~$t$.

\begin{lemma}\label{lemma_of_integral_of_psi_Deltaf^2}
  Assume that $M$ is a closed manifold and let $\psi\in C^\infty(M)$.
  Let $\lambda$ be the lowest eigenvalue of $-\Delta +\psi$ and $f$ a
  positive eigenfunction corresponding to $\lambda$, i.e.  $\lambda
  f=-\Delta f+\psi f$. Then
  \begin{align}\label{integral of psi Delta f ^2}
    \int_M\psi \Delta f^2\,dy=\int_M 2 \left(\left|\nabla\nabla \log
        f\right|^2+\Rc(\nabla\log f, \nabla\log f)\right)f^2\,dy.
  \end{align}
\end{lemma}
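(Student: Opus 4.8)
The plan is to eliminate the potential $\psi$ using the eigenvalue equation, pass to the function $h:=\log f$ (which is smooth because $f>0$), and then match the two sides of the claimed identity via Bochner's formula and integration by parts, all boundary terms vanishing since $M$ is closed.

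For the left-hand side, I would start from $\psi=\lambda+\Delta f/f$, which follows from $\lambda f=-\Delta f+\psi f$ together with $f>0$. Since $M$ is closed, $\int_M\Delta f^2\,dy=0$, hence
\[
  \int_M\psi\,\Delta f^2\,dy=\int_M\frac{\Delta f}{f}\,\Delta f^2\,dy.
\]
Writing $h=\log f$ one computes $\Delta f/f=\Delta h+|\nabla h|^2$ and $\Delta f^2=\Delta e^{2h}=(2\Delta h+4|\nabla h|^2)e^{2h}$, so after expanding the product the left-hand side equals
\[
  \int_M\bigl(2(\Delta h)^2+6|\nabla h|^2\,\Delta h+4|\nabla h|^4\bigr)e^{2h}\,dy.
\]

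For the right-hand side I would apply Bochner's formula to $h$ in the form
\[
  |\Hess h|^2+\Rc(\nabla h,\nabla h)=\frac12\Delta|\nabla h|^2-\langle\nabla h,\nabla\Delta h\rangle,
\]
multiply by $2f^2=2e^{2h}$, and integrate over $M$. In the $\Delta|\nabla h|^2$ term I move the Laplacian onto $e^{2h}$, and in the $\langle\nabla h,\nabla\Delta h\rangle$ term I integrate by parts; using $\Delta e^{2h}=(2\Delta h+4|\nabla h|^2)e^{2h}$ and $\Div(e^{2h}\nabla h)=(\Delta h+2|\nabla h|^2)e^{2h}$, the right-hand side collapses to exactly the same expression $\int_M(2(\Delta h)^2+6|\nabla h|^2\,\Delta h+4|\nabla h|^4)e^{2h}\,dy$, and comparing the two displays completes the proof.

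The whole computation is elementary and I do not anticipate a genuine obstacle. The two points needing a little care are that positivity of $f$ is precisely what makes $\log f$ smooth, and that all integrations by parts are legitimate on the closed manifold $M$. The one place an arithmetic slip could hide is the degree-two polynomial in $\Delta h$ and $|\nabla h|^2$ that appears on each side, so I would verify the coefficient matching with some care.
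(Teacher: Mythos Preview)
Your argument is correct. The ingredients are exactly those of the paper's proof---the eigenvalue equation to eliminate $\psi$, Bochner's formula, and integration by parts on a closed manifold---and the arithmetic on both sides checks out as you wrote it.

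The organization differs slightly from the paper. You pass to $h=\log f$ at the outset and reduce each side independently to the common scalar expression $\int_M\bigl(2(\Delta h)^2+6|\nabla h|^2\Delta h+4|\nabla h|^4\bigr)e^{2h}\,dy$, then compare. The paper instead stays with $f$: it writes $\psi\Delta f^2=\lambda\Delta f^2+2(\Delta f)^2+2\,\Delta f\,|\nabla f|^2/f$, integrates by parts to obtain $-2\langle\nabla f,\nabla(\Delta f)\rangle-2\langle\nabla f,\nabla(|\nabla f|^2/f)\rangle$, applies Bochner to the first term, and handles the second via the algebraic identity $|\nabla\nabla f|^2=f^2|\nabla\nabla\log f|^2+2f^2\,\nabla\nabla\log f(\nabla\log f,\nabla\log f)+f^2|\nabla\log f|^4$. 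Your route avoids this last identity at the cost of expanding the polynomial; the paper's route keeps the Hessian terms packaged but needs the $\nabla\nabla f$ versus $\nabla\nabla\log f$ relation. Neither is more than a cosmetic variation of the other.
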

\begin{proof}
  We have $\psi f=\lambda f+\Delta f$ and
  \begin{align*}
    \psi \Delta f^2&=2\psi f\Delta f+2\psi |\nabla f|^2\\
    &=2(\lambda f+\Delta f)\Delta f+2(\lambda f+\Delta f)\frac{|\nabla f|^2}{f}\\
    &=\lambda\left(2f\Delta f+2|\nabla f|^2\right)+2\left({\Delta f}\right)^2+2\,\frac{\Delta f |\nabla f|^2}{f}\\
    &=\lambda\Delta f^2+2\left({\Delta f}\right)^2+2\,\frac{\Delta f
      |\nabla f|^2}{f}.
  \end{align*}
  We observe that
  \begin{align}\label{psi Delta f formula}
    \int_M\psi \Delta f^2\,dy&=\int_M2\left({\Delta
        f}\right)^2+2\,\frac{\Delta f |\nabla f|^2}{f}\,dy\\\nonumber
    &=\int_M-2\,\langle\nabla f,\nabla(\Delta
    f)\rangle-2\,\langle\nabla f,\nabla\left(\frac{|\nabla
        f|^2}{f}\right)\rangle \,dy.
  \end{align}
  Now we calculate the two terms of the RHS in Eq.~\eqref{psi Delta f
    formula}. For the first term we have by Bochner's formula
  \begin{align}\label{1st term in psi Delta f formula}
    -2\langle \nabla f,\nabla(\Delta f)\rangle=2|\nabla\nabla
    f|^2+2\Rc(\nabla f, \nabla f)-\Delta(|\nabla f|^2).
  \end{align}
  The second term writes as
  \begin{align}\label{2nd term in psi Delta f formula}
    \langle\nabla f,\nabla\left(\frac{|\nabla
        f|^2}{f}\right)\rangle&=\langle\nabla
    f,\nabla\left(f|\nabla\log f|^2\right)\rangle\\ \nonumber
    &=\langle\nabla f,\nabla f|\nabla\log f|^2+2f\nabla\nabla\log
    f(\nabla\log f,\cdot)\rangle\\ \nonumber &=f^2\left|\nabla\log
      f\right|^4+2f^2\nabla\nabla\log f\left(\nabla\log f,\nabla\log
      f\right)\\ \nonumber &=\left|\nabla\nabla
      f\right|^2-f^2\left|\nabla\nabla \log f\right|^2
  \end{align}
  where in the last equality we used that
  \begin{align*}
    \nabla\nabla\log f=\frac{\nabla\nabla f}{f}-\frac{\nabla
      f\otimes\nabla f}{f^2}=\frac{\nabla\nabla f}{f}-\nabla\log
    f\otimes\nabla\log f,
  \end{align*}
  and moreover
  \begin{align*}
    \left|\nabla\nabla f\right|^2&=f^2\left|\nabla\nabla\log f+\nabla\log f\otimes\nabla\log f\right|^2\\
    &=f^2\left|\nabla\nabla\log f\right|^2+2f^2\nabla\nabla\log
    f\left(\nabla\log f, \nabla\log f\right)+f^2\left|\nabla\log
      f\right|^4.
  \end{align*}
  Plugging Eqs.~\eqref{1st term in psi Delta f formula} and \eqref{2nd
    term in psi Delta f formula} into Eq.~\eqref{psi Delta f formula}
  we get
  \begin{equation*}
    \int_M\psi\Delta f^2\,dy=\int_M 2 f^2 \left|\nabla\nabla \log
      f\right|^2+2\Rc(\nabla f, \nabla f)\,dy.\qedhere
  \end{equation*}
\end{proof}

Let $\lambda(t)$ be the lowest eigenvalue of $-\Delta+cA$ where $c$ is
a constant, indeed
\begin{align}\label{eigenvalue definition - integral}
  \lambda(t)=\inf\left\{\int_M\left|\nabla\phi\right|^2+cA\phi^2 \,dy
    :\int_M\phi^2 \,dy=1,\ \phi\in C^\infty(M)\right\}.
\end{align}
Let $f(t,\cdot)$ be the corresponding positive eigenfunction
normalized by $$\int_M f^2(t,y)\,dy=1.$$
\begin{theorem}\label{theorem of eigenvalue monotonicity}
  At all times $t_0$ at which the function $t \mapsto \lambda(t)$ is
  differentiable we have
  \begin{align}\label{eigenvalue derivative}
    \lambda'(t_0) &= \frac{1}{2} \int_M \Big( \left| \alpha - 2 \nabla
      \nabla \log f\right|^2 + \left( 4 c - 1 \right) |\alpha|^2
    \\
    &\quad+ \Theta(2 \nabla \log f) + \frac{4c-1}{2} \left( B - \Delta
      A \right) \Big) f^2 \, dy.  \notag\end{align} In particular, for
  $c=1/4$ we have
  \begin{align}\label{eigenvalue derivative when c=1/4}
    \lambda' = \frac{1}{2} \int_M \left(\left|\alpha-2\nabla\nabla\log
        f\right|^2+\Theta(2\nabla\log f)\right)f^2 \,dy.
  \end{align}
\end{theorem}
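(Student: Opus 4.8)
The plan is to sidestep any regularity assumption on the eigenfunction in the time variable by using the variational characterisation \eqref{eigenvalue definition - integral} together with a fixed test family. Fix a time $t_0$ at which $\lambda$ is differentiable, let $f=f(t_0,\cdot)$ be the corresponding normalised positive eigenfunction, and pick any smooth one-parameter family $\bar f(t,\cdot)\in C^\infty(M)$ with $\bar f(t_0,\cdot)=f$ and $\int_M\bar f^2\,dy=1$ for $t$ near $t_0$ (for instance $\bar f(t,\cdot):=f/\|f\|_{L^2(g(t))}$, where $dy$ denotes the Riemannian volume of $g(t)$). Put $\mu(t):=\int_M\bigl(|\nabla\bar f|^2+cA\bar f^2\bigr)\,dy$. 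Then $\mu$ is smooth, and by \eqref{eigenvalue definition - integral} one has $\mu(t)\ge\lambda(t)$ with equality at $t_0$; hence the nonnegative function $\mu-\lambda$ attains a minimum at $t_0$, and since $\lambda$ is differentiable there, $\lambda'(t_0)=\mu'(t_0)$. It therefore suffices to differentiate $\mu$.

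Differentiating $\mu$ using $\partial_t g^{ij}=2\alpha^{ij}$, $\partial_t\,dy=-A\,dy$ and \eqref{A and B}, and evaluating at $t_0$ with $h:=\partial_t\bar f|_{t_0}$, gives
\[
\mu'(t_0)=\int_M\Bigl(2\alpha(\nabla f,\nabla f)+2\langle\nabla h,\nabla f\rangle+c(2|\alpha|^2+B)f^2+2cAfh-A|\nabla f|^2-cA^2f^2\Bigr)\,dy.
\]
The terms containing $h$ combine, after an integration by parts, into $\int_M2h(-\Delta f+cAf)\,dy=2\lambda\int_Mhf\,dy$; differentiating the constraint $\int_M\bar f^2\,dy=1$ at $t_0$ yields $2\int_Mhf\,dy=\int_MAf^2\,dy$, so the $h$-dependence drops out and
\[
\lambda'(t_0)=\int_M\Bigl(2\alpha(\nabla f,\nabla f)+2c|\alpha|^2f^2+cBf^2-A|\nabla f|^2-cA^2f^2\Bigr)\,dy+\lambda\int_MAf^2\,dy.
\]
This is the step that establishes the announced fact that $\lambda'(t_0)$ does not involve the time derivative of the eigenfunction. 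Now I would eliminate $\lambda$ using the eigenvalue equation $\lambda f^2=-f\Delta f+cAf^2$: it replaces $-cA^2f^2+\lambda Af^2$ by $-Af\Delta f$, and then $-A|\nabla f|^2-Af\Delta f=-\tfrac12A\Delta(f^2)$, leaving
\[
\lambda'(t_0)=\int_M\Bigl(2\alpha(\nabla f,\nabla f)+2c|\alpha|^2f^2+cBf^2-\tfrac12A\Delta(f^2)\Bigr)\,dy.
\]

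It remains to recast this integral in the claimed form, and I expect this last step to be the main (though purely organisational) obstacle. The tools are Lemma~\ref{lemma_of_integral_of_psi_Deltaf^2} and the divergence identity
\[
\Div\bigl(f^2\alpha(\nabla\log f)\bigr)=2\alpha(\nabla f,\nabla f)+f^2\langle\Div(\alpha),\nabla\log f\rangle+f^2\langle\alpha,\nabla\nabla\log f\rangle.
\]
Integrating the latter over the closed $M$ lets one trade $\int_M2\alpha(\nabla f,\nabla f)\,dy$ for $\int_M\bigl(-2\alpha(\nabla f,\nabla f)-2\langle\Div(\alpha),\nabla\log f\rangle f^2-2\langle\alpha,\nabla\nabla\log f\rangle f^2\bigr)\,dy$, producing the $-\alpha(V,V)$ and $\langle-2\Div(\alpha),V\rangle$ contributions of $\Theta(2\nabla\log f)$ together with the cross term of the square; integration by parts gives $\int_M-\tfrac12A\Delta(f^2)\,dy=\int_M\langle\nabla A,\nabla\log f\rangle f^2\,dy$, the $\langle\nabla A,V\rangle$ contribution of $\Theta(2\nabla\log f)$; and Lemma~\ref{lemma_of_integral_of_psi_Deltaf^2} with $\psi=cA$, rewritten (using $\int_McA\Delta(f^2)\,dy=c\int_M(\Delta A)f^2\,dy$) as
\[
\int_M\bigl(2|\nabla\nabla\log f|^2+2\Rc(\nabla\log f,\nabla\log f)-c\,\Delta A\bigr)f^2\,dy=0,
\]
supplies the $|\nabla\nabla\log f|^2$ term of the square, the $\Rc$ term of $\Theta(2\nabla\log f)$, and the $\Delta A$-terms. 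Splitting $2c|\alpha|^2=\tfrac12|\alpha|^2+\tfrac12(4c-1)|\alpha|^2$ and $cB=\tfrac14B+\tfrac{4c-1}{4}B$, one then completes the square $\tfrac12|\alpha|^2f^2-2\langle\alpha,\nabla\nabla\log f\rangle f^2+2|\nabla\nabla\log f|^2f^2=\tfrac12|\alpha-2\nabla\nabla\log f|^2f^2$, recognises the curvature, divergence, $\nabla A$ and remaining Laplacian terms as $\tfrac12\Theta(2\nabla\log f)f^2$, and is left with $\tfrac12\bigl((4c-1)|\alpha|^2+\tfrac{4c-1}{2}(B-\Delta A)\bigr)f^2$, which gives \eqref{eigenvalue derivative}. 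Setting $c=1/4$ annihilates the $(4c-1)$-terms and produces \eqref{eigenvalue derivative when c=1/4}.
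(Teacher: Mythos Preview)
Your proof is correct and follows essentially the same approach as the paper's: the same variational trick with a smooth normalized test family $\bar f$ (the paper's $\varphi$) to replace $\lambda'(t_0)$ by $\mu'(t_0)$, the same elimination of the time derivative via the eigenvalue equation and the differentiated constraint, and the same use of the divergence identity for $f^2\alpha(\nabla\log f)$ together with Lemma~\ref{lemma_of_integral_of_psi_Deltaf^2} to assemble $\Theta(2\nabla\log f)$ and complete the square. The only differences are cosmetic orderings of the intermediate manipulations.
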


\begin{proof}
  Fix $t_0 \in (0,T)$ where the function $t \mapsto \lambda(t)$ is
  differentiable, and let $\varphi: (0,T) \times M \to \R_{>0}$ be a
  smooth function such that
  \begin{enumerate}
  \item $\displaystyle\int_M \varphi(t,y)^2 dy = 1$ for all $t \in (0,T)$, and\\

  \item $\varphi(t_0, \cdot) = f(t_0, \cdot)$.
  \end{enumerate}
  For instance $\varphi(t)$ may be chosen as $f(t_0)\sqrt{dy(g(t_0))/d
    y(g(t))}$ where $dy(g(t))$ is the volume form with respect to
  metric $g(t)$.  Let
  \begin{equation}\label{mut}
    \mu(t) := \int_M \left( |\nabla \varphi(t,y)|^2 + c A(t,y) \varphi(t,y)^2 \right) dy.
  \end{equation}
  Then $\mu(t)$ is a smooth function by definition.  The trick to work
  with $\mu(t)$ rather than $\lambda(t)$ allows to bypass time
  derivatives of the eigenfunction $f(t, \cdot)$.  Note that $\mu(t)
  \geq \lambda(t)$ for all $t \in (0,T)$, and $\mu(t_0) =
  \lambda(t_0)$, so that $$\lambda'(t_0) = \mu'(t_0).$$

  Differentiation of \eqref{mut} gives
  \begin{align*}
    \mu' =\int_M & 2 \alpha(\nabla \varphi, \nabla \varphi) + 2 \langle \nabla \varphi', \nabla \varphi \rangle + c A' \varphi^2 + 2 c A \varphi \varphi'\\
    & -\left( |\nabla \varphi|^2 + c A \varphi^2 \right) A \, dy\\
    = \int_M & 2 \alpha( \nabla \varphi, \nabla \varphi) - 2 \varphi' \Delta \varphi + c A' \varphi^2 + 2 c A \varphi \varphi'\\
    & + \varphi \langle \nabla A, \nabla \varphi \rangle + A \varphi \Delta \varphi - c A^2 \varphi^2 \, dy\\
    = \int_M & 2 \alpha(\nabla \varphi, \nabla \varphi) + c A' \varphi^2 + \varphi \langle \nabla A, \nabla \varphi \rangle \, dy + \lambda \int_M 2 \varphi' \varphi - A\varphi^2 \, dy\\
    = \int_M & 2 \alpha( \nabla \varphi, \nabla \varphi) + c A' \varphi^2 + \varphi \langle \nabla A, \nabla \varphi \rangle \, dy\\
    = \int_M & 2 \alpha(\nabla \varphi, \nabla \varphi) + c \left( 2 |\alpha|^2 + B \right) \varphi^2 - \frac 1 2 A \Delta \varphi^2 \, dy\\
    = \int_M & 2 \alpha(\nabla \varphi, \nabla \varphi) + 2 c
    |\alpha|^2 \varphi^2 + c \left( B - \Delta A \right) \varphi^2 + c
    A \Delta \varphi^2 - \frac 1 2 A \Delta \varphi^2 \, dy,
  \end{align*}
  where in the fourth equality we used
  that $\int_M \left( 2 \varphi \varphi' - A \varphi^2 \right) dy = 0$
  (which is due to the normalization of $\varphi$).

  Noting that
  \begin{align*}
    \Div \left( \varphi \alpha(\nabla \varphi, \cdot) \right) & = \alpha( \nabla \varphi, \nabla \varphi) + \varphi \Div(\alpha)(\nabla \varphi) + \varphi \langle \alpha, \nabla \nabla \varphi \rangle\\
    & = 2 \alpha(\nabla \varphi, \nabla \varphi) + \varphi
    \Div(\alpha)(\nabla \varphi) + \varphi^2 \langle \alpha, \nabla
    \nabla \log \varphi \rangle
  \end{align*}
  and by the divergence theorem, we have
  \begin{align}\label{integral of alpha in lambda}
    & \int_M 2 \alpha( \nabla \varphi, \nabla \varphi) \, dy = \int_M
    4 \alpha( \nabla \varphi, \nabla \varphi) - 2 \alpha(\nabla
    \varphi, \nabla \varphi) \, dy\\ \nonumber & = \int_M -2 \varphi
    \Div(\alpha)(\nabla \varphi) - 2 \varphi^2 \langle \alpha, \nabla
    \nabla \log \varphi \rangle - 2 \alpha(\nabla \varphi, \nabla
    \varphi) \, dy.
  \end{align}
  In Eq.~\eqref{integral of psi Delta f ^2} let $\psi=cA$, then we get
  \begin{align}\label{integral of c A Delta f ^2}
    \int_McA \Delta \varphi^2\,dy=\int_M 2 \varphi^2
    \left|\nabla\nabla \log \varphi\right|^2+2\Rc(\nabla \varphi,
    \nabla \varphi)\,dy.
  \end{align}
  Plugging \eqref{integral of alpha in lambda} and \eqref{integral of
    c A Delta f ^2} into the equation for $\mu'$ we obtain
  \begin{align*}
    \mu' =\int_M&-2\varphi\Div(\alpha)(\nabla \varphi)-2\varphi^2\langle\alpha,\nabla\nabla \log \varphi\rangle-2\alpha(\nabla \varphi, \nabla \varphi)+2c|\alpha|^2\varphi^2\\
    &+c\left(B-\Delta A\right)\varphi^2+2 \varphi^2 \left|\nabla\nabla \log \varphi\right|^2+2\Rc(\nabla \varphi, \nabla \varphi)-\frac 1 2 A \Delta \varphi^2 \,dy\\
    =\int_M&\left(2\left|\nabla\nabla \log \varphi\right|^2-2\langle\alpha,\nabla\nabla \log \varphi\rangle+\frac 1 2|\alpha|^2+\left(2c-\frac 1 2\right)|\alpha|^2\right)\varphi^2\\
    &+\left(2\left({\Rc}-\alpha\right)\left(\nabla\log \varphi,\nabla\log \varphi\right)+\langle\nabla A-2\Div(\alpha),\nabla\log \varphi\rangle\right)\varphi^2\\
    &+c\left(B-\Delta A\right)\varphi^2\,dy\\
    =\int_M&\left(\frac 1 2\left|\alpha-2\nabla\nabla\log \varphi\right|^2+\left(2c- \frac 1 2 \right)|\alpha|^2\right)\varphi^2\\
    &+\left( \frac 1 2\Theta(2\nabla\log \varphi)+\left(c-\frac 1
        4\right) \left(B-\Delta A\right) \right)\varphi^2\,dy,
  \end{align*}
  so that
  \begin{align*}
    \lambda'(t_0) = \mu'(t_0) = \frac{1}{2} &\int_M \Big( \left| \alpha - 2 \nabla \nabla \log f\right|^2 + \left( 4 c - 1 \right) |\alpha|^2\\
    &+ \Theta(2 \nabla \log f) + \frac{4c-1}{2} \left( B - \Delta A
    \right) \Big) f^2 \, dy,
  \end{align*}
  as claimed.
\end{proof}

Let us compare Theorems \ref{theorem of F monotonicity} and
\ref{theorem of eigenvalue monotonicity}, resp.~formulas \eqref{F k
  derivative} and \eqref{eigenvalue derivative}.  Let $\phi=-2\log f$,
then Eq.~\eqref{eigenvalue derivative} can be rewritten as
\begin{align}\label{eigenvalue derivative in phi}
  \lambda' = \frac{1}{2} &\int_M \Big( \left| \alpha + \nabla \nabla \phi \right|^2 + (4c-1) |\alpha|^2\\
  &+ \Theta(-\nabla \phi) + \frac{4c-1}{2} \left( B - \Delta A\right)
  \Big) e^{-\phi} \, dy.\notag
\end{align}
Letting $k=4c$, we see that the two evolution equations are formally
proportional. We note that in Eq.~\eqref{F k derivative} the
exponential $e^{-\phi}$ is a normalized solution to the conjugate heat
equation, while $e^{-\phi/2}$ in \eqref{eigenvalue derivative in phi}
is the normalized eigenfunction of~$\lambda(t)$.

\section{Eigenvalue monotonicity in various flows}

In this section we list explicit formulas of the eigenvalue evolution
in different flows. The constant $c$ is assumed to be no less than
$1/4$.

\subsection{Hamilton's Ricci flow}
In the case of Ricci flow, monotonicity of the lowest eigenvalue of
$-\Delta+cR$ for $c\ge 1/4$ and its applications has been established
by Cao \cite{Cao1, Cao2} as mentioned in the introduction. See also
the work of Li \cite{Li}. Plugging
$$\alpha=\Rc, \quad\Theta=0, \quad B-\Delta A=0$$
into Eq.~\eqref{eigenvalue derivative} we get Cao's formula for the
Ricci flow \cite{Cao2}:
\begin{align*}
  \lambda'(t)=\int_M\frac 1 2\left(\left|{\Rc}-2\nabla\nabla\log
      f\right|^2+\left(4c- 1 \right)|{\Rc}|^2\right)f^2 \,dy.
\end{align*}
This can be applied to show that every steady breather in the Ricci
flow is Ricci flat.

\subsection{List's Extended Ricci flow}
We work out the details in the extended Ricci flow.
\begin{corollary}
  Assume that $(M, g(t))$ is a solution to the extended Ricci flow
  equation, and that $\lambda(t)$ is the lowest eigenvalue of
$$
-\Delta+c\left(R-a_n|\nabla v|^2\right),
$$
then we have
\begin{align}\label{lambda derivative in extended Ricci flow}
  \lambda'(t)=\int_M&\frac 1 2\left|{\Rc}-a_n\nabla v\otimes\nabla
    v-2\nabla\nabla\log f\right|^2 f^2\\\nonumber &+\left(2c-\frac 1
    2\right)\left|{\Rc}-a_n\nabla v\otimes\nabla v\right|^2
  f^2\\\nonumber &+\frac{a_n}2\left(\left(\Delta v-2\langle\nabla
      v,\nabla\log f\rangle\right)^2+\left(4 c- 1 \right) \left(\Delta
      v\right)^2\right)f^2dy.
\end{align}
In particular, a steady breather of the extended Ricci flow is trivial
in the sense that
\begin{align*}
  \Rc=0,\quad v\equiv \operatorname{const}.
\end{align*}
\end{corollary}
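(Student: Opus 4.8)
The plan is to obtain the formula \eqref{lambda derivative in extended Ricci flow} by inserting the List-flow data computed in Section~3 into Theorem~\ref{theorem of eigenvalue monotonicity}, and to deduce the breather rigidity from Corollary~\ref{corrollary no steady breather in RH flow}.

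\textbf{Deriving the formula.} For List's extended Ricci flow one has $\alpha = \Rc - a_n\,dv\otimes dv$ and $A = R - a_n|\nabla v|^2$, together with $B-\Delta A = 2a_n(\Delta v)^2$ (this is \eqref{B - Delta A in extended Ricci flow}) and $\Theta(V) = a_n\bigl(\langle\nabla v,V\rangle + \Delta v\bigr)^2$. I would substitute these directly into formula~\eqref{eigenvalue derivative}. Using $\tfrac12(4c-1) = 2c - \tfrac12$, the part $|\alpha - 2\nabla\nabla\log f|^2 + (4c-1)|\alpha|^2$ reproduces the first two lines of \eqref{lambda derivative in extended Ricci flow} with $\alpha = \Rc - a_n\,\nabla v\otimes\nabla v$ inserted, while the remaining contribution $\tfrac12\,\Theta(2\nabla\log f) + \tfrac12\cdot\tfrac{4c-1}{2}\,(B-\Delta A)$ becomes, after inserting the expressions for $\Theta$ and $B-\Delta A$ just recalled, exactly the third line. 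Collecting the three pieces into a single integral over $M$ against the weight $f^2\,dy$ then yields \eqref{lambda derivative in extended Ricci flow}.

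\textbf{Rigidity of steady breathers.} Since $\Theta\ge 0$ and $B-\Delta A = 2a_n(\Delta v)^2\ge 0$ for List's flow (established in Section~3), every steady breather meets the hypotheses of Corollary~\ref{corrollary no steady breather in RH flow}, whence $\alpha\equiv 0$ and $B-\Delta A\equiv 0$ on $M$ throughout the breather interval. From $2a_n(\Delta v)^2 = B-\Delta A = 0$ and $a_n>0$ we get $\Delta v\equiv 0$, so $v$ is constant on the closed manifold $M$; then $dv\equiv 0$ and hence $0 = \alpha = \Rc - a_n\,dv\otimes dv = \Rc$. (When $c>1/4$ one could alternatively read $\alpha\equiv 0$ off the $(4c-1)|\alpha|^2$ term in a vanishing $\lambda'$, but routing through Corollary~\ref{corrollary no steady breather in RH flow} also handles the borderline case $c = 1/4$.)

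No substantive obstacle arises here: the statement is a direct specialization of Theorem~\ref{theorem of eigenvalue monotonicity} and Corollary~\ref{corrollary no steady breather in RH flow}, and the only care needed is the routine bookkeeping of the factors of $2$ when expanding $\Theta(2\nabla\log f)$ and $B-\Delta A$.
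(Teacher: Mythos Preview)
Your derivation of \eqref{lambda derivative in extended Ricci flow} matches the paper's exactly: both say ``direct plug-in'' of the List-flow data into Theorem~\ref{theorem of eigenvalue monotonicity}.

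For the rigidity of steady breathers you take a different route from the paper. The paper argues directly from the eigenvalue formula just established: on a steady breather $\lambda(t_1)=\lambda(t_2)$ by diffeomorphism invariance, and since every summand in \eqref{lambda derivative in extended Ricci flow} is nonnegative when $c>1/4$, the resulting $\lambda'\equiv 0$ forces $(4c-1)(\Delta v)^2=0$ and $|{\Rc}-a_n\nabla v\otimes\nabla v|^2=0$, hence $v$ constant and $\Rc=0$. You instead invoke Corollary~\ref{corrollary no steady breather in RH flow}, obtaining $\alpha=0$ and $B-\Delta A=0$ from the $\F_k$-monotonicity machinery and then reading off the same conclusions. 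Both arguments are valid. Your route has the merit of covering $c=1/4$ uniformly and of not relying on differentiability of $t\mapsto\lambda(t)$; the paper's route has the virtue of being self-contained within the eigenvalue framework of Section~8 and of actually using the formula the corollary records. Your parenthetical already identifies the paper's approach and the reason you didn't take it, so you are aware of the trade-off.
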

\begin{proof}
  Eq.~\eqref{lambda derivative in extended Ricci flow} is a direct
  plug-in. When $(M, g(t))$ is a steady breather, there are times
  $t_1<t_2$ such that $\lambda(t_1)=\lambda(t_2)$ for any $c>1/4$. In
  particular we have $\Delta v=0$ on the closed manifold $M$, thus $v$
  is constant, and moreover $M$ is Ricci flat by ${\Rc}-a_n\nabla
  v\otimes\nabla v=0$.
\end{proof}

\subsection{M\"uller's Ricci flow coupled with harmonic map flow}
We already used $\F_k$ to rule out nontrivial steady breathers. Using
eigenvalue monotonicity, one does not need to solve the conjugate heat
equation.  The lowest eigenvalue of
$$-\Delta+c\left(R-a(t)|\nabla\varphi|^2\right)$$
is nondecreasing along the flow. The conclusions remain the same as in
Corollary~\ref{corrollary no steady breather in RH flow}.

\subsection{Lorentzian mean curvature flow when the ambient space has nonnegative sectional curvature}
When $M$ evolves along the Lorentzian mean curvature flow
\eqref{Lorentzian mean curvature flow equation}, the lowest eigenvalue
of
$$-\Delta-cH^2$$
is nondecreasing provided sectional curvature of the ambient space is
nonnegative.

\section{Normalized eigenvalue and no expanding breathers theorem}
The eigenvalue of $-\Delta+cA$ is not scale invariant. Suppose that
$\alpha$ is invariant under scaling which is true in all of our
examples. If we re-scale a Riemannian metric $g$ to $\varepsilon g$ by
a positive constant $\varepsilon$, then
\begin{align*}
  -\Delta_{\varepsilon g}+cA_{\varepsilon
    g}=\varepsilon^{-1}\left(-\Delta_{g}+cA_{g}\right),
\end{align*}
and for the lowest eigenvalue we get $\lambda_{\varepsilon
  g}=\varepsilon^{-1}\lambda_g$. Thus the (non-normalized) lowest
eigenvalue only works in the steady case. Following Perelman \cite{P}
we define the scale invariant eigenvalue by
\begin{align}\label{definition of normalized eigenvalue}
  \bar\lambda_g:=\lambda_g V^{2/n}_g
\end{align}
where $V$ denotes the volume of $M$.

In the following for simplicity of calculations we let $c=1/4$.
\begin{proposition}\label{normalized eigenvalue increases whenever
    negative}
  Suppose that $(M, g(t))$ is a solution to the abstract geometric
  flow \eqref{metric evolving} with $\alpha$ being scale
  invariant. Assume that $\Theta$ is nonnegative. Let $\lambda(t)$ be
  the lowest eigenvalue of $-\Delta+A/4$.  Then whenever
  $\bar\lambda(t)\le 0$ one has $\bar\lambda'(t)\ge 0$.
\end{proposition}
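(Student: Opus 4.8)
The plan is to differentiate $\bar\lambda=\lambda V^{2/n}$, where $V=\int_M\,dy$ is the volume, and to reduce the statement to two scalar estimates. Since $\partial_t\,dy=-A\,dy$ we have $V'=-\int_M A\,dy$, and the product rule gives
\begin{equation*}
  \bar\lambda'=V^{2/n}\Bigl(\lambda'+\frac{2}{n}\,\lambda\,\frac{V'}{V}\Bigr).
\end{equation*}
Because $V^{2/n}>0$, the hypothesis $\bar\lambda\le 0$ is simply $\lambda\le 0$, and it suffices to prove $\lambda'+\frac{2}{n}\lambda\,V'/V\ge 0$ under that hypothesis.

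First I would bound $\lambda'$ from below. At a time $t_0$ where $t\mapsto\lambda(t)$ is differentiable, Theorem~\ref{theorem of eigenvalue monotonicity} provides formula~\eqref{eigenvalue derivative when c=1/4} with $c=1/4$; discarding the nonnegative term $\Theta(2\nabla\log f)$ and using the pointwise trace inequality $|\alpha-2\nabla\nabla\log f|^2\ge\frac{1}{n}\bigl(A-2\Delta\log f\bigr)^2$ gives $\lambda'\ge\frac{1}{2n}\int_M(A-2\Delta\log f)^2 f^2\,dy$. As $f^2\,dy$ is a probability measure, Cauchy--Schwarz then yields $\lambda'\ge\frac{1}{2n}\bigl(\int_M(A-2\Delta\log f)f^2\,dy\bigr)^2$. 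Integration by parts gives $\int_M(\Delta\log f)f^2\,dy=-2\int_M|\nabla f|^2\,dy$, and the eigenvalue equation in the form $\int_M\bigl(|\nabla f|^2+\frac{1}{4}Af^2\bigr)\,dy=\lambda$ then turns the integral into $\int_M(A-2\Delta\log f)f^2\,dy=\int_M Af^2\,dy+4\int_M|\nabla f|^2\,dy=4\lambda$, so that $\lambda'\ge\frac{8}{n}\lambda^2$.

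Second I would produce the matching bound on $\lambda$ itself: inserting the normalized constant $\phi\equiv V^{-1/2}$ into the variational characterization~\eqref{eigenvalue definition - integral} gives $\lambda\le\frac{1}{4V}\int_M A\,dy$, hence $V'/V=-\frac{1}{V}\int_M A\,dy\le-4\lambda$. Now assume $\lambda\le 0$. Multiplying $V'/V\le-4\lambda$ by the nonpositive number $\frac{2}{n}\lambda$ reverses the inequality, so $\frac{2}{n}\lambda\,V'/V\ge-\frac{8}{n}\lambda^2$. Combining with $\lambda'\ge\frac{8}{n}\lambda^2$ gives $\lambda'+\frac{2}{n}\lambda\,V'/V\ge 0$, hence $\bar\lambda'=V^{2/n}\bigl(\lambda'+\frac{2}{n}\lambda\,V'/V\bigr)\ge 0$, which is the claim.

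I expect the main obstacle to be the differential inequality $\lambda'\ge\frac{8}{n}\lambda^2$: one must isolate the trace part of $|\alpha-2\nabla\nabla\log f|^2$ correctly, carry the numerical constant faithfully through the two integrations by parts and the eigenvalue identity, and check that the Cauchy--Schwarz step loses nothing beyond what the trace inequality already lost. Everything else is bookkeeping. I would also add a word on regularity: the computation is valid at each point of differentiability of $\lambda$, which is all the proposition asks for; since $\lambda(t)$ is in fact locally Lipschitz (min--max for a smoothly varying operator on a compact manifold), such points are of full measure, and the monotonicity statement needed for the no-expanding-breather application follows from the usual comparison argument with a flowed minimizer.
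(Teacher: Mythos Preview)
Your proof is correct and follows essentially the same route as the paper: both use the variational upper bound $\lambda\le\frac{1}{4V}\int_M A\,dy$ from the constant test function, the explicit formula~\eqref{eigenvalue derivative when c=1/4} for $\lambda'$, the trace inequality to extract $\frac{1}{n}(A-2\Delta\log f)^2$, and Cauchy--Schwarz/H\"older together with the eigenvalue identity to obtain $\int_M(A-2\Delta\log f)^2 f^2\,dy\ge 16\lambda^2$. The only cosmetic difference is that you first isolate the scalar inequality $\lambda'\ge\frac{8}{n}\lambda^2$ and then combine it with the volume term, whereas the paper keeps everything inside a single chain of inequalities; the ingredients and constants match exactly.
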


\begin{proof}
  Recall that by Eq.(\ref{eigenvalue definition - integral}) and
  choosing $\phi(t,y)={V}^{-1/2}$ we
  have $$\lambda(t)\le\frac{1}{4V}\int_M A dy.$$ When
  $\bar\lambda(t)\le0$ we obtain
  \begin{align*}
    \bar\lambda'(t)&=\lambda'(t)V^{2/n}+\frac{2\lambda}{n}V^{n/2-1}\int_M(-A)dy\\
    &\ge V^{n/2}\left(\lambda'(t)-\frac{8\lambda^2(t)}{n}\right)\\
    &\ge \frac{V^{n/2}}{2}\left(
      \int_M\left(\left|\alpha-2\nabla\nabla\log
          f\right|^2+\Theta(2\nabla\log
        f)\right)f^2dy-\frac{16\lambda^2(t)}{n}\right)\\
  \end{align*}
  where $f$ is the normalized positive eigenfunction corresponding to
  $\lambda$.

  We observe that
  \begin{align*}
    \left|\alpha-2\nabla\nabla\log
      f\right|^2&=\left|\alpha-2\nabla\nabla\log f-\frac 1 n
      \left(A-2\Delta\log f\right)g\right|^2\\ &\qquad+\frac 1 n
    \left(A-2\Delta\log f\right)^2.
  \end{align*}
  Recall that $f$ is the normalized eigenfunction and by H\"older's
  inequality we obtain
  \begin{align}\label{holder inequality}
    \int_M(A-2\Delta \log f)^2 f^2dy&=\int_M(A-2\Delta \log f)^2
    f^2dy\int_M f^2 dy\\ \nonumber &\ge\left(\int_M(A-2\Delta \log
      f)f\cdot f dy\right)^2\\ \nonumber
    &=\left(\int_MAf^2+4\left|\nabla f\right|^2 dy\right)^2\\
    \nonumber &=16\,\lambda^2(t).
  \end{align}
  Finally we have
  \begin{equation*}
    \bar\lambda'(t)\ge 0. \qedhere
  \end{equation*}
\end{proof}

If $\lambda(t)\le0$ we derived indeed the inequality
\begin{align}\label{derivative of the normalized eigenvalue second
    place}
  \bar\lambda'(t)\ge &\frac{V^{2/n}}{2}\left(
    \int_M\left(\left|\alpha-2\nabla\nabla\log f-\frac 1 n
        \left(A-2\Delta\log f\right)g\right|^2+\Theta(2\nabla\log
      f)\right)f^2dy\right)\\\nonumber
  &+\frac{V^{2/n}}{2n}\left(\int_M(A-2\Delta \log f)^2
    f^2dy-\left(\int_M(A-2\Delta \log f)f\cdot f dy\right)^2
    dy\right).
\end{align}

Now we may use \eqref{derivative of the normalized eigenvalue second
  place} to rule out nontrivial expanding breathers.
\begin{theorem}
  Suppose that $(M, g(t))$ is a solution to the abstract geometric
  flow \eqref{metric evolving} with $\alpha$ being scale
  invariant. Assume that $\Theta$ is nonnegative. If $(M, g(t))$ is an
  expanding breather for $t_1<t_2$, then it has to be a gradient
  soliton on $(t_1, t_2)$ in the sense that
  \begin{align*}
    \alpha-2\nabla\nabla\log f-\frac{4\lambda}{n} g=0
  \end{align*}
  where $f$ is the positive normalized eigenfunction corresponding to
  $\lambda(t)$.  Moreover one has $$\Theta(2\nabla\log f)=0.$$
\end{theorem}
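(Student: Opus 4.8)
The plan is to mimic the standard no‑breather argument (as used above for steady breathers), now carried out with the scale‑invariant eigenvalue $\bar\lambda_g=\lambda_g V_g^{2/n}$, Proposition~\ref{normalized eigenvalue increases whenever negative}, and the sharpened inequality~\eqref{derivative of the normalized eigenvalue second place}. First I would record that $\bar\lambda$ is invariant under rescaling the metric and under pullback by a diffeomorphism (eigenvalues and total volume being diffeomorphism invariants), so that the breather relation $g(t_2)=c\,\eta^*g(t_1)$ forces $\bar\lambda(t_1)=\bar\lambda(t_2)$.

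The key preliminary step is to show that $\bar\lambda(t)\le 0$ on all of $[t_1,t_2]$. Since $c>1$ we have $V(t_2)=c^{n/2}V(t_1)>V(t_1)$, while taking the constant test function in \eqref{eigenvalue definition - integral} gives $\lambda(t)\le\frac1{4V(t)}\int_M A\,dy=-\frac14\frac{d}{dt}\log V(t)$; integrating over $[t_1,t_2]$ yields $\int_{t_1}^{t_2}\lambda(t)\,dt\le-\frac n8\log c<0$, hence $\bar\lambda(t_0)<0$ for some $t_0$. Now I would combine this with Proposition~\ref{normalized eigenvalue increases whenever negative}, which makes $\bar\lambda$ nondecreasing on any time interval where $\bar\lambda\le0$. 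If $\bar\lambda(t_1)\ge0$, choose the largest zero $t^-$ of $\bar\lambda$ in $[t_1,t_0]$ (which exists by the intermediate value theorem); then $\bar\lambda\le0$ on $[t^-,t_0]$, so $\bar\lambda(t_0)\ge\bar\lambda(t^-)=0$, a contradiction; hence $\bar\lambda(t_1)=\bar\lambda(t_2)<0$. If $\bar\lambda(s)>0$ for some $s$, choose the largest zero $t^\circ$ of $\bar\lambda$ in $[s,t_2]$; then $\bar\lambda\le0$ on $[t^\circ,t_2]$, so $\bar\lambda(t_2)\ge\bar\lambda(t^\circ)=0$, again a contradiction. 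Therefore $\bar\lambda\le0$ throughout $[t_1,t_2]$, Proposition~\ref{normalized eigenvalue increases whenever negative} makes $\bar\lambda$ nondecreasing there, and since it agrees at both endpoints it is constant, so $\bar\lambda'(t)=0$ for all $t\in(t_1,t_2)$.

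Finally, since $\lambda(t)\le0$ on $(t_1,t_2)$, inequality~\eqref{derivative of the normalized eigenvalue second place} is available, and its right-hand side is a sum of two nonnegative quantities: the integral of $|\alpha-2\nabla\nabla\log f-\frac1n(A-2\Delta\log f)g|^2+\Theta(2\nabla\log f)$ against $f^2$ (nonnegative because $\Theta\ge0$), and the Cauchy--Schwarz defect $\int_M(A-2\Delta\log f)^2f^2\,dy-\left(\int_M(A-2\Delta\log f)f^2\,dy\right)^2$. As $\bar\lambda'=0$, both vanish. From the first, using $f>0$ and $\Theta\ge0$, one gets pointwise $\alpha-2\nabla\nabla\log f=\frac1n(A-2\Delta\log f)\,g$ and $\Theta(2\nabla\log f)=0$. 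From the second, the equality case of the Hölder inequality with weight $f^2\,dy$ (recall $\int_M f^2\,dy=1$) forces $A-2\Delta\log f$ to be constant on $M$; integrating against $f^2$ and using the eigenvalue equation with $c=1/4$ together with $\int_M(A-2\Delta\log f)f^2\,dy=\int_M(Af^2+4|\nabla f|^2)\,dy=4\lambda$ identifies this constant as $4\lambda$. Substituting back gives $\alpha-2\nabla\nabla\log f-\frac{4\lambda}{n}\,g=0$ on $(t_1,t_2)$, which together with $\Theta(2\nabla\log f)=0$ is the assertion.

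I expect the main obstacle to be the intermediate claim that $\bar\lambda\le0$ on the entire interval $[t_1,t_2]$: Proposition~\ref{normalized eigenvalue increases whenever negative} controls $\bar\lambda'$ only where $\bar\lambda\le0$, so one must carefully exclude the scenario in which $\bar\lambda$ is positive near the endpoints and becomes negative only in the interior — this is exactly where the volume-growth estimate $V(t_2)>V(t_1)$ and the connectedness argument above are needed. Everything else is, as in the steady case, a matter of extracting the two separate nonnegative pieces of \eqref{derivative of the normalized eigenvalue second place}. I would also note explicitly that, as throughout §7--8, this argument runs under the standing hypothesis that $t\mapsto\lambda(t)$ is differentiable, which (as $V$ is smooth) makes $\bar\lambda$ differentiable and legitimizes applying Proposition~\ref{normalized eigenvalue increases whenever negative} and \eqref{derivative of the normalized eigenvalue second place} at every time of $(t_1,t_2)$.
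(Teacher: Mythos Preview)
Your proposal is correct and follows essentially the same route as the paper: establish $\bar\lambda(t_1)=\bar\lambda(t_2)$ by scale- and diffeomorphism-invariance, show $\bar\lambda\le0$ on $[t_1,t_2]$ so that Proposition~\ref{normalized eigenvalue increases whenever negative} forces $\bar\lambda$ to be constant, and then read off the equality case of~\eqref{derivative of the normalized eigenvalue second place}. The only differences are cosmetic: the paper locates a time with $\lambda(t_0)\le0$ by applying the mean value theorem to $V$ (picking $t_0$ with $V'(t_0)\ge0$) rather than integrating $\lambda\le-\tfrac14(\log V)'$, and it leaves the connectedness argument you spell out implicit.
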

\begin{proof}
  Since $\bar\lambda$ is invariant under diffeomorphism and rescaling,
  we have $\bar\lambda(t_1)=\bar\lambda(t_2)$. Since $V(t_1)<V(t_2)$
  there must be a time $t_0\in (t_1,t_2)$ such that $V'(t_0)\ge
  0$. Hence
$$
\lambda(t_0)\le\frac{1}{4V(t_0)}\int_M A(t_0)
dy=-\frac{1}{4V(t_0)}V'(t_0)\le 0.
$$
Proposition \ref{normalized eigenvalue increases whenever negative}
then implies $\bar\lambda(t_1)\le \bar\lambda(t_0)\le0$. Thus, on the
whole interval $[t_1, t_2]$, the function $\bar\lambda(t)$ is
nonpositive increasing and equals at the end points.  This means that
the RHS of \eqref{derivative of the normalized eigenvalue second
  place} vanishes. In particular, the second line of \eqref{derivative
  of the normalized eigenvalue second place} being zero means that
equality holds in H\"older's inequality \eqref{holder inequality}.
Thus $A-2\Delta\log f$ must be a spatial constant which is
$4\lambda(t)$ because $f$ is a normalized eigenfunction corresponding
to $\lambda(t)$.  The vanishing of the first line of \eqref{derivative
  of the normalized eigenvalue second place} means that
\begin{equation*}
  \alpha-2\nabla\nabla\log f-\frac{4\lambda}{n} g=0, \quad \Theta(2\nabla\log
  f)=0.\qedhere
\end{equation*}
\end{proof}

\end{document}